\newtheorem{theorem}{Theorem}
\newtheorem{proposition}[theorem]{Proposition}
\theoremstyle{definition}
\theoremstyle{remark}
\newtheorem{remark}{Observation}
\theoremstyle{assumption}
\newtheorem{assumption}{Assumption}
\newcommand{\mb}{\mathbb}
\newcommand{\mc}{\mathcal}
\newcommand{\br}{\breve}
\newcommand{\tf}{\textsf}
\newcommand{\tbf}{\textbf}
\newcommand{\I}{\mathcal{I}}
\begin{document}
%
% paper title
% Titles are generally capitalized except for words such as a, an, and, as,
% at, but, by, for, in, nor, of, on, or, the, to and up, which are usually
% not capitalized unless they are the first or last word of the title.
% Linebreaks \\ can be used within to get better formatting as desired.
% Do not put math or special symbols in the title.
\title{$\epsilon$-Nash Equilibria for Major Minor LQG Mean Field Games with Partial Observations of All Agents}
%
%
% author names and IEEE memberships
% note positions of commas and nonbreaking spaces ( ~ ) LaTeX will not break
% a structure at a ~ so this keeps an author's name from being broken across
% two lines.
% use \thanks{} to gain access to the first footnote area
% a separate \thanks must be used for each paragraph as LaTeX2e's \thanks
% was not built to handle multiple paragraphs
%
\author{Dena~Firoozi% <-this % stops a space
\footnote{D. Firoozi is with the Department of Decision Sciences, HEC Montr\'eal, Montreal, QC, Canada, e-mail: dena.firoozi@hec.ca.}~~and~Peter~E.~Caines % <-this % stops a space
\footnote{P. E. Caines is with the Centre for Intelligent Machines (CIM) and the Department of Electrical and Computer Engineering (ECE), McGill University, Montreal, QC, Canada, e-mail: peterc@cim.mcgill.ca.}}% <-this % stops a space
%%\thanks{J. Doe and J. Doe are with Anonymous University.}% <-this % stops a space
%%\thanks{Manuscript received April 19, 2005; revised August 26, 2015.}
\date{}
\maketitle
\begin{abstract} The partially observed major minor LQG and nonlinear mean field game (PO MM LQG MFG) systems where it is assumed the major agent's state is partially observed by each minor agent, and the major agent completely observes its own state have been analysed in the literature. In this paper, PO MM LQG MFG problems with general information patterns are studied where (i) the major agent has partial observations of its own state, and (ii) each minor agent has partial observations of its own state and the major agent's state. The assumption of partial observations by all agents leads to a new situation involving the recursive estimation by each minor agent of the major agent's estimate of its own state. For a general case of LQG MFG systems, the existence of $\epsilon$-Nash equilibria together with the individual agents' control laws yielding the equilibria are established via the Separation Principle.
\end{abstract}
% Note that keywords are not normally used for peerreview papers.
%\begin{IEEEkeywords}
%estimates of estimates, partial observations, LQG MFG
%\end{IEEEkeywords}

% For peer review papers, you can put extra information on the cover
% page as needed:
% \ifCLASSOPTIONpeerreview
% \begin{center} \bfseries EDICS Category: 3-BBND \end{center}
% \fi
%
% For peerreview papers, this IEEEtran command inserts a page break and
% creates the second title. It will be ignored for other modes.
%\IEEEpeerreviewmaketitle

\section{Introduction}
% The very first letter is a 2 line initial drop letter followed
% by the rest of the first word in caps.
% 
% form to use if the first word consists of a single letter:
% \IEEEPARstart{A}{demo} file is ....
% 
% form to use if you need the single drop letter followed by
% normal text (unknown if ever used by the IEEE):
% \IEEEPARstart{A}{}demo file is ....
% 
% Some journals put the first two words in caps:
% \IEEEPARstart{T}{his demo} file is ....
% 
% Here we have the typical use of a "T" for an initial drop letter
% and "HIS" in caps to complete the first word.
Mean field game theory (MFG) studies the existence of approximate Nash equilibria and the corresponding individual strategies for stochastic dynamical systems in games involving a large number of agents. Basically, the theory exploits the relationship between the large finite and the corresponding infinite limit population problems. The equilibria are termed $\epsilon$-Nash equilibria and are generated by the local, limited information control actions of each agent in the population. The control actions constitute the best response of each agent with respect to the behaviour of the mass of agents. Moreover, the approximation error, induced by using the MFG solution, converges to zero as the population size tends to infinity.

The analysis of this set of problems originated in \cite{HuangCDC2003a, HuangCDC2003b, HuangCIS2006, HuangTAC2007}, and independently in \cite{Lasry2006a, Lasry2006b, Lasry2007}. Many extensions and generalizations of MFGs exist, principally the probabilistic formulation \cite{CarmonaDelarueBook2018}, the master equation approach \cite{CardaliaguetMasterEqBook2019} and mean-field-type control theory \cite{BensoussanBook2013}. In \cite{Huang2010, Huang2012} the authors analyse and solve the completely observed (CO) linear quadratic Gaussian (LQG) systems case where there is a major agent (i.e. non-asymptotically vanishing as the population size goes to infinity)  together with a population of minor  agents (i.e. individually asymptotically negligible). The new feature in this case is that the mean field becomes stochastic but by minor agent state extension the existence of  closed-loop $\epsilon$-Nash equilibria is established together with the individual agents' control laws that yield the equilibria \cite{Huang2012}. A convex analysis method is utilized in \cite{FCJ-Convex2018} to retrieve the solutions of \cite{Huang2010}, where no assumption is imposed on the evolution of the mean field a priori. For such systems, among others, \cite{HuangMA2020} presents a multi-scale analysis and the notion of asymptotic solvability, \cite{FirooziPakniyatCainesCDC2017, FPC-arXiv2018} present a hybrid optimal control approach to address switching or cessation of agents, and \cite{Kordonis2015} considers the case with a random number of minor agents. The CO MM nonlinear (NL) MFG problem is treated in \cite{NourianSiam2013}. Using the probabilistic approach to MFGs, \cite{CarmonaZhu2016,CarmonaWang2017} establish the existence of open-loop and closed-loop $\epsilon$-Nash equilibria for a general MM MFG and provide explicit solutions for an LQG case.  The works \cite{LasryLions2018, CardaliaguetCirant2018} characterize the Nash equilibrium for a general MFG system with one major agent and an infinite number of minor agents via the MFG  Master Equations. It is to be noted that for the LQG case it has been demonstrated in \cite{HuangCIS2020} that the LQG MM MFG Master Equations yield the original LQG MM MFG equations of  \cite{Huang2010}. (Another line of research characterizes a Stackelberg equilibrium between the major agent and the minor agents, see e.g. \cite{BensoussanSICON2017,BasarMoon2018}.)

In the purely minor agent case the mean field is deterministic and this obviates the need for observations on other agents' states. This is a separate issue from that of an agent estimating its own state from partial observations on that state, see \cite{HuangMTNS2006}. However, when a system has a major agent whose state is partially observed the standard MFG procedure for generating a Nash equilibrium needs to be extended for each minor agent by including an estimate of the major agent's state generated by that agent. In \cite{Kizilkale2014, KizilkaleTAC2016}, partially observed LQG mean field games with major and minor agents (PO MM LQG MFG) have been investigated and in \cite{SenCDC2014, SenSIAM2016, CainesSen2019}, a nonlinear generalization of this problem is considered. The main results in those papers are obtained with the assumptions that (i) the major agent's state is partially observed by the minor agents and (ii) the major agent has complete observations on its own state. 

An initial investigation of the case where assumption (i) holds but the major agent has partial observations on its own state was presented in \cite{FirooziCDC2015}. A thorough investigation of this case is given in the present paper, while \cite{FJCCDC2018, FJC-arXiv2018} analyse the case where all agents partially observe a common process, and \cite{FirooziCainesCDC2019} studies the case with two major agents  and identifies the partial information patterns which lead to tractable solutions. The main contributions of the current paper are summarized as follows: 
\begin{itemize}
\item PO MM LQG MFG problems with general information patterns are studied where (i) the major agent has partial observations of its own state, and (ii) each minor agent has partial observations of its own state and the major agent's state. 

 \item In the PO MM LQG MFG theory presented here the major agent recursively estimates its own state, and each minor agent recursively estimates its own state and the major agent's estimate of its own state (in order to estimate the major agent's feedback control input). In addition, both the major agent and minor agents generate estimates of the system's mean field.
 
 We remark that an infinite regress does not happen here due to the asymmetric major minor (MM) feature of the MFG problem.  
%\item MFG theory is extended to cover the general case of indefinite LQG MFG systems which alleviates positive definiteness condition of weight matrices in linear quadratic cost functionals. 

\item The existence of $\epsilon$-Nash equilibria together with the individual agents' control laws yielding the equilibria is established; this is achieved in the PO MM LQG case by an application of the Separation Principle which also yields computationally tractable solutions while in the nonlinear case is far more complex (see \cite{SenCDC2014, SenSIAM2016}). 

\item This extension of the situation in \cite{KizilkaleTAC2016}, where only assumption (ii) holds, is in particular motivated by optimal execution problems in financial markets where there exist one institutional trader (interpreted as major agent) and a large population of high frequency traders (interpreted as minor agents) who attempt to maximize their own wealth. To obtain the Nash equilibrium best response trading strategy, each minor agent estimates the major agent's inventory and trading rate based on its partial observations of market state and this entails the estimation of the major agent's self estimates. The reader is referred to the works \cite{FirooziCDC2016,FirooziISDG2017}  for more details on financial applications.
\end{itemize}
The rest of the paper is organized as follows. Section \ref{sec:PO MM MFG LQG} introduces partially observed major-minor (PO MM) LQG MFG systems. The estimation and control problems for PO MM LQG MFG systems are addressed in Section \ref{sec:Est Ctrl PO MM MFG LQG}. The simulation results and the concluding remarks are presented in Section \ref{sec:simulation} and Section \ref{sec:conclusions}, respectively.
\section{Partially Observed Major-Minor LQG MFG Systems} \label{sec:PO MM MFG LQG}
 A class of major-minor LQG MFG (MM LQG MFG) systems including a large population of $N$ stochastic dynamic minor agents with a stochastic dynamic major agent is considered.  \subsection{Dynamics}
 The dynamics of the major and minor agents in the class of systems under consideration are, respectively, given by
\begin{align}
dx_0 &= [A_0 x_0  +  B_0 u_0] dt + D_0 d w_0,   \label{MajorAgentEq}\\
dx_i  &= [A(\theta_i) x_i + B(\theta_i) u_i + G x_0] dt + D dw_i,  \label{MinorAgentEq}
\end{align}
where $t \geq 0$, $ 1 \leq i \leq N < \infty$, $\theta_i \in \Theta \subset \mb{R}^{n\times(n+m)}$, where $\Theta$ is a parameter set and $\theta_i$ determines the pair $(A(\theta_i),B(\theta_i))$ of $i$-th minor agent ($\mc{A}_i$). Here $x_i \in \mathbb{R}^n,~ 0 \leq i \leq N$, are the states, $u_i \in \mathbb{R}^m,~ 0 \leq i \leq N$, are the control inputs, $w = \lbrace w_i,~ 0 \leq i \leq N \rbrace$ denotes the set of $(N+1)$ independent standard Wiener processes in $\mathbb{R}^r$ on an underlying probability space $(\Omega, \mathcal{F}, P)$ which is sufficiently large that $w$ is progressively measurable with respect to the filtration $\mathcal{F}^w := (\mathcal{F}_t^{w};~t \geq 0) \subset \mathcal{F}$, and $\mathbb{E} w_i(t) w_i(t)^T = I_r\, t,$ with $I_r$ denoting the identity matrix of size $r$.
\begin{assumption} \label{IntialStateAss}
The initial states $\lbrace x_i(0),~ 0 \leq i \leq N\rbrace$ defined on $(\Omega, \mathcal{F}, P)$ are identically distributed, mutually independent and also independent of $\mathcal{F}_{\infty}^{w}$, with $\mathbb{E}x_i(0)=0$. Moreover, $\sup_{i} \mathbb{E}\Vert x_i(0)\Vert^2 \leq c < \infty $, $0 \leq i \leq N<\infty$, with $c$ independent of $N$.
\end{assumption} 
The matrices $A_0, B_0, D_0, G,$ and $D$ are constant matrices of appropriate dimensions. From \eqref{MinorAgentEq}, $A(.)$ and $B(.)$ depend on the parameter $\theta$ which specifies the minor agent's type. Minor agents are given in $K$ distinct types with $1 \leq K < \infty$. The notation $\mathcal{I}_k$ is defined as 
\begin{equation}
\mathcal{I}_k = \lbrace i : \theta_i = \theta^{(k)},\, 1 \leq i \leq N \rbrace , \quad 1 \leq k \leq K\nonumber,
\end{equation}
where $\theta^{(k)}\in\Theta$ and the cardinality of $\mathcal{I}_k$ is denoted by $N_k = |\mathcal{I}_k|$. Then, $\pi^{N} = (\pi_{1}^{N},...,\pi_{K}^N),~ \pi_k^N = \tfrac{N_k}{N} \in \mathbb{R}, ~ 1 \leq k \leq K$, denotes the empirical distribution of the parameters $(\theta_1,...,\theta_N)$ sampled independently of the initial conditions and Wiener processes of the agents $\mc{A}_i, 1 \leq i \leq N$. \begin{assumption} \label{EmpiricalDistAss}
There exists $\pi$ such that $\mbox{lim}_{N \rightarrow \infty} \pi^N = \pi $ a.s. 
\end{assumption}

\subsection{Cost Functionals}
The individual (finite) large population infinite horizon cost functional for the major agent $\mc{A}_0$ is specified by
\begin{align} \label{MajorCostLrgPop}
J_{0}^{N}(u_0, u_{ -0}) = \mathbb{E} \int_{0}^{\infty}& e^{- \rho t} \Big \lbrace {\Vert x_0 - \Phi(x^{(N)})\Vert ^2_{Q_0} + \Vert u_0 \Vert_{R_0}^2} \Big \rbrace dt, \\
\Phi(x^{(N)}) ~&:=~ H_0 x^{(N)} + \eta_0, \nonumber
\end{align}
where $R_0 > 0$, and the individual (finite) large population infinite horizon cost functional for a minor agent $\mathcal{A}_i, 1 \leq i \leq N$, is given by
\begin{align} \label{MinorCostLrgPop}
J_{i}^{N}(u_i, u_{-i}) = &\mathbb{E} \int_{0}^{\infty} e^{-\rho t} \Big \lbrace \Vert x_i - \Psi(x^{(N)},x_0) \Vert_{Q}^{2} + \Vert u_i \Vert_R^2 \Big\rbrace dt, \\
&\Psi(x^{(N)},x_0) ~:=~ H_1 x_0 + H_2 x^{(N)} + \eta, \nonumber
\end{align}
where $R >0$. We note that the major agent $\mathcal{A}_0$ and minor agents $\mathcal{A}_i,\, 1 \leq i \leq N$, are coupled with each other through the average term $x^{(N)} = \frac{1}{N} \sum_{i=1}^{N} x_i$ in their cost functionals given by \eqref{MajorCostLrgPop}-\eqref{MinorCostLrgPop}.  
\subsection{Observation Processes}
The major agent's partial observations $y_0 \in \mathbb{R}^{p}$ is given by
\begin{align} \label{MajorPartialObs}
dy_0  = L_0 [x_0^T,\, (x^{(N)})^T]^T dt + \sigma_{v_0}dv_0, 
\end{align}
where $v_0$ is a standard Wiener process in $\mb{R}^\ell$ with $\mathbb{E} [v_0(t) v_0(t)^T] = I_{\ell}t$, and matrix $L_0$ is given by 
\begin{align}
L_0 = \left[ \begin{array}{cc}
{l}_0^1 & 0_{p \times n}\\
\end{array} \right],
\end{align}
with ${l}_0^1, \sigma_{v_0}$ being constant matrices of appropriate dimension.
The partial observation process $y_i \in \mathbb{R}^p$ for a minor agent $\mathcal{A}_i,~1\leq i \leq N$, of type $k,\, 1 \leq k \leq K$, is given by
 \begin{align} \label{MinorPartialObs}
{dy}_i = L_k [x^T_i, \,  x^T_0, \, (x^{(N)})^T]^T dt + \sigma_{v}dv_i,
\end{align}
where $\{v_i,\, 1\leq i \leq N\}$ denotes the set of $N$ independent standard Wiener processes in $\mb{R}^{\ell}$ with $\mathbb{E}[v_i(t) v_i(t)^T]=I_{\ell}t$, and matrix $L_k$ is given by  
\begin{align}
L_k = \left[ \begin{array}{ccc}
l^1_k & l^2_k & 0_{p \times n} 
 \end{array} \right],
\end{align}
where $l^1_k, \, l^2_k$, and $\sigma_v$ are constant matrices of appropriate dimension. 

\begin{assumption}\label{independence_ass}$\{v_i, \,0 \leq i \leq N \}$ are (N+1) independent Wiener processes that are independent of the Wiener processes $\{w_i, \, 0 \leq i\leq N \}$ and the initial states $\{x_i(0), \,0 \leq i \leq N\}$. 
 \end{assumption}

\begin{assumption}[Major Agent $\sigma$-Fields and Linear Controls] \label{ass:MajorControl}
 The family of partial observation information sets $\mathcal{F}^y_0$ is defined to be the increasing family of $\sigma$-fields of partial observations $\lbrace \mathcal{ F}^{y}_{0,t}, t \geq 0 \rbrace$ generated by the major agent $\mathcal{A}_0$'s partial observations $(y_0(\tau), 0 \leq \tau \leq t)$ on its own state as given in (\ref{MajorPartialObs}). For the major agent $\mathcal{A}_0$ the set of control inputs $\mathcal{U}_{0,y}^L$ is defined to be the collection of linear feedback control laws adapted to $\lbrace \mathcal{F}^y_{0,t}, t \geq 0 \rbrace$.
\end{assumption}

\begin{assumption}[Minor Agent $\sigma$-Fields and Linear Controls]\label{ass: MinorContrAction} The family of partial observation information sets $\mathcal{F}_i^y,1 \leq i \leq N$, is defined to be the increasing $\sigma$-fields $\lbrace \mathcal{F}^y_{i,t}, t \geq 0 \rbrace$ generated by the minor agent $\mathcal{A}_i$'s partial observations $(y_i(\tau), 0 \leq \tau \leq t)$, on its own state and the major agent's state, as given in (\ref{MinorPartialObs}). For each minor agent $\mathcal{A}_i, 1 \leq i \leq N$, the set of control inputs ${\mathcal{U}}^{L}_{i,y}$ is defined to be the collection of time-invariant linear feedback control laws adapted to $\lbrace \mathcal{F}^y_{i,t}, t \geq 0 \rbrace$.
\end{assumption}
The set of control inputs $\mc{U}^{N,L}_y$ is defined to be the collection of linear feedback control laws adapted to $\mc{F}^{N,y}_t = \{ \bigvee_{i=0}^{N}\mc{F}^y_i  \}$. 

We note that for simplicity of notation throughout the paper, time arguments for deterministic and stochastic processes may be dropped, as in \eqref{MajorAgentEq}-\eqref{MinorPartialObs}. Further, the analysis in the next section could be directly applied to the case where matrices $G$, $D$, $Q$, $H_1$, $H_2$, $\eta$, $R$, and $\sigma_v$ depend on the type $k,\, 1\leq k \leq K,$ of minor agents. 

\section{Estimation and Control Solutions for PO MM LQG MFG Systems} \label{sec:Est Ctrl PO MM MFG LQG}
 In this section we present the solution to partially observed (PO) MM LQG MFG problems where it is assumed that the major agent partially observes its own state, and each generic minor agent partially observes its own state and the major agent's state. The problem is first solved in the infinite population case which is far simpler to solve than the finite large population problem. Because the agents in the infinite population case are decoupled and therefore the problem reduces to the LQG tracking problem whose solution is given in \textit{Theorem \ref{thm:StochIndefiniteLQ}}. Subsequently, the $\epsilon$-Nash equilibrium property is established in \textit{Theorem \ref{Thm: POLQGMM-MFG}} for the system when the infinite population control laws are applied to the finite large population PO MM LQG MFG system. 
  
 The following theorem is a restriction to the constant matrix parameter case of the general result in \cite{XYZBook1999}.  
% \hfill $\square$\\
 %After applying the mean field methodology to decouple the agents, the problem of obtaining the best response trading strategy is transformed to a stochastic indefinite LQ problem that is solved for using the following theorem which is a restriction to the constant matrix parameter case of the general result in \cite{XYZBook1999}.  
\begin{theorem}[Stochastic LQ Problem \cite{XYZBook1999}]\label{thm:StochIndefiniteLQ}
Let $\br T>0$ be given. For any $(\br s,\br y)\in [0,\br T) \times \mathbb{R}^n$, consider the following linear system
 \begin{equation} \label{generalDynamics}
  d\br x=\big [\br A \br x + \br B \br u + \br b \big]dt +\big[\br C \br x+ \br D \br u+ \br \sigma \big]d\br w, 
 \end{equation}
where $t \in [\br s,\br T], ~\br x(\br s)=\br y$ and $\br A$, $\br B$, $\br C$, $\br D$, $\br b$, $\br \sigma$ are matrix valued functions of suitable sizes, $\br w(.) \in \mb{R}^r$ is a standard Wiener process. Moreover, $\mc{F}_t = \sigma \{\br w(\tau), 0\leq \tau \leq t \}$, and $\br u(.) \in \mc{U}$, where $\mc{U}$ is the set of all $\mc{F}_t$-adapted $\mb{R}^m$-valued processes such that $\mb{E}\int_{0}^{T}\hspace{0mm}\Vert u(t) \Vert^2 dt\hspace{0mm}< \infty$. 

A quadratic cost functional is given by
\begin{multline}\label{generalCostFunc}
J(\br s, \br y,\br u(.)) = \mathbb{E} \Big \{ \frac{1}{2} \int_{0}^{\br T} \big[ \langle \br P \br x(t), \br x(t) \rangle + \langle \br N \br x(t), \br u(t) \rangle \\+ \langle \br R \br u(t), \br u(t) \rangle \big] dt + \frac{1}{2} \langle \br {\bar{P}} \br x(\br T), \br x(\br T) \rangle \Big \},  
\end{multline}
with $\br{\bar{P}} \geq 0$, $\br P$, $\br N$ and $\br R$ being $\mc{S}^{n}$, $\mc{S}^{n}$, $\mathbb{R}^{m\times n}$ and $\mc{S}^{m}$-valued functions of time, respectively, and, where $\mathcal{S}^n$ denotes symmetric matrix space of size $n$. Moreover, $P-N^TR^{-1}N \geq 0$. 
\begin{comment}
%\begin{assumption}
\begin{align*}
&A, C \in L^{\infty}(0,T; \mathbb{R}^{n\times n}), B, D, N^T \in L^{\infty}(0,T; \mathbb{R}^{n\times m}), \nonumber \\ 
&\sigma \in L^{\infty}(0,T;\mathbb{R}^{n \times r} ),~  b \in L^2(0, T; \mathbb{R}^n),  
\nonumber \\
&P \in L^{\infty}(0, T; \mathcal{S}^n), R \in L^{\infty}(0,T; \mathcal{S}^m), \bar{P}\in \mathcal{S}^n, \nonumber 
\end{align*}
%\end{assumption}
\end{comment}

We also denote the set of all $\mb{R}^n$-valued continuous functions defined on $[s,T]$ by $\mathbf{C}([s,T];\mb{R}^n)$. Then, let $\br{\Pi}(.)\in \mathbf{C}([\br s,\br T]; \mathcal{S}^n)$ be the solution of the Riccati equation
\begin{multline} \label{generalRiccatiEq}
\dot{\br{\Pi}} + \br{\Pi} \br{A} + \br{A}^{T} \br{\Pi} + \br{C}^{ T} \br P \br C + \br P - (\br B^{ T} \br{\Pi} + \br{N} + \br{D}^{T}\br{\Pi} \br{C})^{T} (\br{R} +\br{D}^{T} \br{\Pi} \br{D})^{-1}\\ \times (\br{B}^{T}\br{\Pi} + \br{N} + \br{D}^{T}\br{\Pi} \br{C}) =0, \quad a.e. t \in[\br s,t],\,\,
\br{\Pi}(\br T)= \br{\bar{P}},
\end{multline}
where $\br R+\br D^T\br \Pi \br D > 0,~ a.e.\, t \in [\br s,\br T]$,
and $\br s(.) \in C([\br s,\br T]; \mathbb{R}^n)$ be the solution of the offset equation given by
\begin{multline*}\label{generalOffsetEq}
\hspace{-3mm}\dot{\br s} + [\br A-\br B(\br R+\br D^{T}\br{\Pi} \br D)^{-1} (\br B^{T}\br P+\br s+\br D^T\br P\br C)]^{T}\br s 
+[ \br C-\br D(\br R+\br D^{T}\br{\Pi} \br{D})^{-1}\\\hspace{-1mm}(\br{B}^T\br{\Pi} + \br{N} + \br{D}^T \br{\Pi} \br C)]^T \br{\Pi} \br{\sigma} 
 +\br{\Pi} \br{b} =0, \quad a.e. \, t \in [\br s,\br T],\,\, \br s(\br T) = 0.
\end{multline*}
Let us define $\br{\Psi} := (\br R+\br D^T \br{\Pi} \br{D})^{-1} [\br{B}^T \br{\Pi} + \br{N} + \br{D}^T \br{\Pi} \br{C}]$,
and $\br{\psi} := (\br R + \br D^T \br{\Pi} \br{D})^{-1} [\br B^T \br s + \br D^T \br{\Pi} \br{\sigma}]$.
Then the stochastic LQ problem \eqref{generalDynamics}-\eqref{generalCostFunc} is solvable at $\br{s}$ with the optimal control $\br{u}^{\circ}(.)$ being in the state feedback form as in 
\begin{equation*}
\br{u}^{\circ}(t) = - \br{\Psi}(t)\br{x}(t) - \br{\psi}(t),\quad t \in [\br s,\br T].  
\end{equation*}
\hfill $\square$
\end{theorem}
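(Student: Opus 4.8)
The plan is to solve the problem by dynamic programming together with a verification argument, exploiting the fact that \eqref{generalDynamics} is linear and \eqref{generalCostFunc} is quadratic; the completion-of-squares method gives an equivalent route. I would postulate that the value function $V(t,\br x) := \inf_{\br u}J(t,\br x,\br u(\cdot))$ is quadratic,
\[
V(t,\br x) = \tfrac{1}{2}\langle \br\Pi(t)\br x, \br x\rangle + \langle \br s(t), \br x\rangle + r(t),
\]
with $\br\Pi(\cdot)$ symmetric-matrix valued, $\br s(\cdot)$ vector valued, and $r(\cdot)$ scalar, and then verify this ansatz against the associated Hamilton--Jacobi--Bellman (HJB) equation. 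The terminal data $V(\br T,\br x)=\tfrac12\langle\br{\bar P}\br x,\br x\rangle$ force $\br\Pi(\br T)=\br{\bar P}$ and $\br s(\br T)=0$, matching the terminal conditions stated for the Riccati and offset equations.

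First I would write the HJB equation for \eqref{generalDynamics}--\eqref{generalCostFunc},
\[
0 = V_t + \inf_{\br u}\Big\{ \langle V_{\br x}, \br A\br x + \br B\br u + \br b\rangle + \tfrac{1}{2}\,\mathrm{tr}\big[(\br C\br x + \br D\br u + \br\sigma)(\br C\br x + \br D\br u + \br\sigma)^{T}V_{\br x\br x}\big] + \tfrac{1}{2}\big(\langle \br P\br x, \br x\rangle + \langle \br N\br x, \br u\rangle + \langle \br R\br u, \br u\rangle\big)\Big\}.
\]
Substituting the ansatz gives $V_{\br x}=\br\Pi\br x+\br s$ and $V_{\br x\br x}=\br\Pi$, so the bracketed quantity is a quadratic in $\br u$ whose Hessian is exactly $\br R+\br D^{T}\br\Pi\br D$. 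The structural hypothesis $\br R+\br D^{T}\br\Pi\br D>0$ makes this quadratic strictly convex in $\br u$, so the pointwise minimizer is obtained from the first-order condition, which solves to the stated feedback law $\br u^{\circ}=-\br\Psi\br x-\br\psi$ (this matching also fixes the normalization of the cross term $\br N$).

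Next I would substitute $\br u^{\circ}$ back and, using $V_t=\tfrac12\langle\dot{\br\Pi}\br x,\br x\rangle+\langle\dot{\br s},\br x\rangle+\dot r$, collect terms by their degree in $\br x$: the quadratic terms must cancel, which reproduces precisely the Riccati equation \eqref{generalRiccatiEq}; the linear-in-$\br x$ terms reproduce the offset equation; and the constant term yields a scalar ODE for $r(\cdot)$ that does not affect the control. Thus the ansatz with $\br\Pi,\br s$ solving the stated equations solves the HJB equation exactly. To turn this into a rigorous optimality claim I would apply It\^o's formula to $V(t,\br x(t))$ along an arbitrary $\br u\in\mc U$, integrate over $[\br s,\br T]$ and take expectations; since $V$ solves the HJB equation the drift dominates the running cost, and the stochastic-integral term has zero mean by the admissibility bound $\mb E\int_0^{T}\|u(t)\|^2dt<\infty$ together with standard moment estimates for the linear SDE. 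This yields $J(\br s,\br y,\br u(\cdot))\ge V(\br s,\br y)$, with equality for $\br u=\br u^{\circ}$.

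The main obstacle is the indefiniteness of the weights: neither $\br P$ nor $\br R$ is assumed nonnegative, so the usual convexity guaranteeing a well-posed minimization is unavailable. The argument must rest entirely on the induced condition $\br R+\br D^{T}\br\Pi\br D>0$ -- the diffusion coupling through $\br D$ effectively regularizes the control weight -- to secure strict convexity in $\br u$ at the HJB level, while $\br P-\br N^{T}\br R^{-1}\br N\ge0$ ensures the cost is bounded below so that the value function is finite. The delicate point is confirming that the completed square is genuinely nonnegative, so that $\br u^{\circ}$ is a true minimizer rather than merely a stationary point; the remaining care is purely the integrability needed to discard the martingale term in the verification step.
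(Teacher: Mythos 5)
This theorem is imported verbatim from \cite{XYZBook1999} and the paper supplies no proof of its own, so there is no in-paper argument to compare against; the relevant comparison is with the proof in the cited source. Your route --- a quadratic ansatz for the value function, pointwise minimization in $\br u$ inside the HJB equation, matching of quadratic/linear/constant terms to recover the Riccati, offset, and scalar equations, and an It\^o-based verification step --- is the standard argument and is essentially the completion-of-squares proof of Yong--Zhou organized through the HJB PDE rather than by applying It\^o's formula directly to $t\mapsto \tfrac12\langle \br\Pi(t)\br x(t),\br x(t)\rangle+\langle\br s(t),\br x(t)\rangle$. The two are the same computation; the direct completion-of-squares version avoids any appeal to viscosity/classical solvability of the HJB equation because the candidate $V$ is exhibited explicitly, which is exactly what your verification paragraph accomplishes anyway. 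I consider the proposal correct in substance.

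Two small points. First, the worry in your last paragraph about whether $\br u^{\circ}$ is ``a true minimizer rather than merely a stationary point'' is already resolved by the hypothesis $\br R+\br D^{T}\br\Pi\br D>0$: after completing the square, the difference $J(\br s,\br y,\br u)-J(\br s,\br y,\br u^{\circ})$ is $\tfrac12\,\mb E\int\langle(\br R+\br D^{T}\br\Pi\br D)(\br u-\br u^{\circ}),\,\br u-\br u^{\circ}\rangle\,dt\geq 0$, so no separate convexity or lower-boundedness argument is needed once a solution $\br\Pi$ of \eqref{generalRiccatiEq} is \emph{assumed} to exist (as it is in the statement); the conditions $\br{\bar P}\geq 0$ and $\br P-\br N^{T}\br R^{-1}\br N\geq 0$ are what guarantee such a solution exists in the first place, which is not something you are asked to prove here. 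Second, you correctly flag the normalization of the cross term: with the cost written as $\tfrac12\langle\br N\br x,\br u\rangle$ the first-order condition produces $\tfrac12\br N$ where the stated $\br\Psi$ has $\br N$, so the statement implicitly uses the convention with a factor of $2$ on the cross term (and the displayed offset equation likewise contains evident typographical slips, e.g.\ $\br B^{T}\br P+\br s+\br D^{T}\br P\br C$ in place of $\br B^{T}\br\Pi+\br N+\br D^{T}\br\Pi\br C$); these are defects of the transcription of the theorem, not of your argument.
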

Henceforth we discuss the stochastic optimal control problem for the major agent, and a generic minor agent. 
\subsection{Mean Field Evolution}
We introduce the empirical state average as
\begin{equation}
x^{(N_k)} = \frac{1}{N_k} \sum_{j\in \I_k} x_{j}^{k},~~~~~1 \leq k \leq K, \nonumber 
\end{equation}  
and write $(x^{(N)})^T= [(x^{(N_1)})^T, ..., (x^{(N_K)})^T]$, where the point-wise in time quadratic mean limit of $x^{(N)}$ as $N\rightarrow \infty$, when it exists, is called the mean field of the system and is denoted by $\bar{x}^T= [(\bar{x}^1)^T, ..., (\bar{x}^K)^T]$. We consider for each minor agent $\mathcal{A}_i$ of type $k$, $1 \leq k \leq K$, a uniform (with respect to $i$ in any subpopulation $k,\, 1\leq k \leq K$) feedback control $u_i^k \in \mathcal{U}_{i,y}^{L} $, which is a function of \\(i) bounded time-invariant matrix $m_k \in \mb{R}^m$,\\ and the minor agent's estimate of:
\begin{itemize}
\item[(ii)]  its own state, i.e. $\hat{x}_{i|\mathcal{F}^y_i}  := \mathbb{E}_{|\mathcal{F}^y_i} x_i = \mathbb{E}\{  x_i | \mathcal{F}^y_i \}$,
\item[(iii)] the major agent's state, i.e. $\hat{x}_{0|\mathcal{F}^y_i}  := \mathbb{E}_{|\mathcal{F}^y_i} x_0 = \mathbb{E}\{  x_0 | \mathcal{F}^y_i  \}$,
\item[(iv)] $x_j,\, 1 \leq j \leq N,\, j\neq i$, i.e. $\hat{x}_{j|\mathcal{F}^y_i}  := \mathbb{E}_{|\mathcal{F}^y_i} x_j = \mathbb{E}\{ x_j | \mathcal{F}^y_i  \}$,
\item[(v)] the major agent's estimate of its own state, i.e. $(\hat{x}_{0|\mathcal{F}^y_0})_{|\mathcal{F}^y_i} := \mathbb{E}_{| \mathcal{F}^y_i} \hat{x}_{0|\mathcal{F}^y_0} =\mathbb{E} \{ \hat{x}_{0|\mathcal{F}^y_0} |  \mathcal{F}^y_i  \},$
\item[(vi)] the major agent's estimate of $x_j,\, 1\leq j \leq N$, i.e. $(\hat{x}_{j|\mathcal{F}^y_0})_{|\mathcal{F}^y_i} := \mathbb{E}_{|\mathcal{F}^y_i} \hat{x}_{j|\mathcal{F}^y_0} =\mathbb{E} \{\hat{x}_{j|\mathcal{F}^y_0} |  \mathcal{F}^y_i  \}.$
\end{itemize}
Hence $u_i^k$ is given by
\begin{multline}\label{generalMinorCntrl}
u_i^k =  L_1^k \hat{x}_{i|\mathcal{F}^y_i}^k + L_2^k \hat{x}_{0|\mathcal{F}^y_i} + \sum_{l=1}^{K} \sum_{j\in \I_l} L_3^{k,l} \hat{x}_{j|\mathcal{F}^y_i}^l \allowdisplaybreaks\\+ L_4^k (\hat{x}_{0|\mathcal{F}^y_0})_{|\mathcal{F}^y_i} + \sum_{l=1}^{K} \sum_{j\in \I_l} L_5^{k,l} (\hat{x}_{j|\mathcal{F}^y_0}^l)_{|\mathcal{F}^y_i} + m_k,
\end{multline}
for bounded matrices $L_1^k, \,L_2^k,\, L_3^{k,l}$, and $L_4^k$ of appropriate dimension, and where bounded matrices $L_3^{k,l}$, $L_5^{k,l}$ satisfy $N_l L_3^{k,l} \rightarrow \bar{L}^{k,l}_3$, $N_l L_5^{k,l} \rightarrow \bar{L}^{k,l}_5$ as $N_l \rightarrow \infty$ for all $k, \, 1\leq k \leq K$. All the coefficient matrices in \eqref{generalMinorCntrl} are time-invariant as per %\textit{Assumption \ref{ass: MinorContrAction}}.% (concerning  the validity of this assumption see \cite{GERADreport2020}).
the following argument. In the complete observations case each agent $\mathcal{A}_i$'s extended state  $(x_i,  {x_j}, 1\leq j \leq N, j \neq i, x_0)$ is generated by the extended dynamics with constant coefficients, as defined by \eqref{MajorAgentEq} and \eqref{MinorAgentEq}, where it shall be assumed that all other agents are using the same feedback strategy. Consequently, the optimal feedback gain for $\mathcal{A}_i$ for the discounted constant coefficient cost function \eqref{MajorCostLrgPop}, depends upon the steady state solution of control Riccati and offset equations with time invariant coefficients and hence is itself time invariant \cite{OptCntrl_Anderson_Moore}. In the partial observations case, the minor agent $\mathcal{A}_i$ generates estimates of its extended state via a linear filter whose drift coefficients (but not, in general, its diffusion coefficients) are the time invariant coefficients of the original system. Hence the feedback gain coefficients in the partially observed case in \eqref{generalMinorCntrl} are time invariant (see \Cref{appendix} and \cite{CainesBook1988}).

 Substituting \eqref{generalMinorCntrl} in \eqref{MinorAgentEq} for $1\leq i \leq N$ and $1 \leq k \leq K$  yields
\begin{multline}
dx_i^k = \Big[A_k x_i^k + G x_0 + B_k \Big( L_1^k \hat{x}_{i|\mathcal{F}^y_i}^k + L_2^k \hat{x}_{0|\mathcal{F}^y_i} + \sum_{l=1}^{K} N_l L_3^{k,l} \hat{x}_{|\mathcal{F}^y_i}^{(N_l)} \allowdisplaybreaks\\+  L_4^k (\hat{x}_{0|\mathcal{F}^y_0})_{|\mathcal{F}^y_i} + \sum_{l=1}^{K} N_l L_5^{k,l} (\hat{x}_{|\mathcal{F}^y_0}^{(N_l)})_{|\mathcal{F}^y_i} + m_k\Big) \Big] dt + D dw_i. 
\end{multline}
Then we take the average over the subpopulation $k$ to obtain 
\begin{multline}\label{HeuristicAve}
dx^{(N_k)} = \Big[A_k x^{(N_k)} + G x_0+ B_k \Big(L_1^k \frac{1}{N_k}\sum_{i\in \I_k}\hat{x}_{i|\mathcal{F}^y_i}^k +  L_2^k\frac{1}{N_k}\sum_{i\in \I_k}\hat{x}_{0|\mathcal{F}^y_i} \allowdisplaybreaks\\+ \sum_{l=1}^{K} N_l L_3^{k,l} \frac{1}{N_k}\sum_{i\in \I_k}\hat{x}_{|\mathcal{F}^y_i}^{(N_l)}+  L_4^k \frac{1}{N_k}\sum_{i\in \I_k}(\hat{x}_{0|\mathcal{F}^y_0})_{|\mathcal{F}^y_i} \allowdisplaybreaks\\ +  \sum_{l=1}^{K} N_l L_5^{k,l} \frac{1}{N_k}\sum_{i\in \I_k}(\hat{x}_{|\mathcal{F}^y_0}^{(N_l)})_{|\mathcal{F}^y_i} +  m_k \Big)\Big] dt + D\frac{1}{N_k}\sum_{i\in \I_k}dw_i.
\end{multline}
To compute the average of the estimation terms in \eqref{HeuristicAve}, we use the state decomposition 
\begin{align}
\begin{bmatrix}
\hat{x}_{i|\mathcal{F}^y_i}\\
\hat{x}_{0|\mathcal{F}^y_i}\\
\hat{x}_{|\mathcal{F}^y_i}^{(N_l)}\\
(\hat{x}_{0|\mathcal{F}^y_0})_{|\mathcal{F}^y_i}\\
(\hat{x}_{|\mathcal{F}^y_0}^{(N_l)})_{|\mathcal{F}^y_i}
 \end{bmatrix}  = \begin{bmatrix}
 \hat{x}_{i|\mathcal{F}^y_i} -x_i\\
\hat{x}_{0|\mathcal{F}^y_i} - x_0\\
\hat{x}_{|\mathcal{F}^y_i}^{(N_l)} - x^{(N_l)}\\
( \hat{x}_{0|\mathcal{F}^y_0})_{|\mathcal{F}^y_i} - \hat{x}_{0|\mathcal{F}^y_0}\\
(\hat{x}_{|\mathcal{F}^y_0}^{(N_l)})_{|\mathcal{F}^y_i} - \hat{x}_{|\mathcal{F}^y_0}^{(N_l)}
 \end{bmatrix} +  \begin{bmatrix}
 x_i\\
 x_0\\
 x^{(N_l)}\\
 \hat{x}_{0|\mathcal{F}^y_0}\\
  \hat{x}_{|\mathcal{F}^y_0}^{(N_l)}
 \end{bmatrix},
\end{align}
which we denote equivalently in the compact form as in
\begin{equation}\label{EstDecompCompact}
\hat{x}_{i|\mc{F}^y_i}^{ex} = -\tilde{x}^{ex}_{i} + x_i^{ex},
\end{equation} 
for $1\leq i \leq N$. Accordingly, we rewrite \eqref{HeuristicAve} for $1 \leq k \leq K$ as 
\begin{multline}\label{HeuristicAveEstErr}
dx^{(N_k)} = \Big[A_k x^{(N_k)} + G x_0 + B_k\Big(L_1^k \frac{1}{N_k}\sum_{i\in \I_k}{x}_{i}^k + L_2^k{x}_{0} + \sum_{l=1}^{K} N_l L_3^{k,l} {x}^{(N_l)}\allowdisplaybreaks\\+ L_4^k \hat{x}_{0|\mathcal{F}^y_0} +  \sum_{l=1}^{K} N_l L_5^{k,l} \hat{x}_{|\mathcal{F}^y_0}^{(N_l)}+  m_k \Big)\Big] dt \allowdisplaybreaks\\ -B_k \Big[L_1^k \frac{1}{N_k}\sum_{i\in \I_k}(x_i^k-\hat{x}_{i|\mathcal{F}^y_i}^k) +  L_2^k\frac{1}{N_k}\sum_{i\in \I_k}(x_0-\hat{x}_{0|\mathcal{F}^y_i}) \allowdisplaybreaks\\+ \sum_{l=1}^{K} N_l L_3^{k,l} \frac{1}{N_k}\sum_{i\in \I_k}\big(x^{(N_l)}-\hat{x}_{|\mathcal{F}^y_i}^{(N_l)}\big) +  L_4^k \frac{1}{N_k}\sum_{i\in \I_k}\big(\hat{x}_{0|\mathcal{F}^y_0}-(\hat{x}_{0|\mathcal{F}^y_0})_{|\mathcal{F}^y_i}\big)\\+  \sum_{l=1}^{K} N_l L_5^{k,l} \frac{1}{N_k}\sum_{i\in \I_k}\big(\hat{x}_{|\mathcal{F}^y_0}^{(N_l)}-( \hat{x}_{|\mathcal{F}^y_0}^{(N_l)})_{|\mathcal{F}^y_i}\big)\Big]dt + D\frac{1}{N_k}\sum_{i\in \I_k}dw_i.
\end{multline}

From \eqref{HeuristicAveEstErr} as $N \rightarrow \infty$ we obtain the convergence in quadratic mean (q.m.) to the solution to 
\begin{multline}\label{HeuristicAveEstErrLimit}
d\bar{x}^k  = \Big[(A_k + B_k L_1^k) \bar{x}^k + (G+B_k L_2^k) x_0+ B_k \Big(\sum_{l=1}^{K} \bar{L}_3^{k,l} \bar{x}^l +  L_4^k \hat{x}_{0|\mathcal{F}^y_0}+  \sum_{l=1}^{K} \bar{L}_5^{k,l} \hat{\bar{x}}_{|\mathcal{F}^y_0}^l +  m_k \Big) \Big] dt \\-B_k \Big[L_1^k (\overline{x_i-\hat{x}_{i|\mathcal{F}^y_i}})^k + L_2^k(\overline{x_0-\hat{x}_{0|\mathcal{F}^y_i}})^k + \sum_{l=1}^{K} \bar{L}_3^{k,l} \big(\overline{\bar{x}^{l}-\hat{\bar{x}}_{|\mathcal{F}^y_i}^{l} }\big)^k \allowdisplaybreaks\\+  L_4^k \big(\overline{\hat{x}_{0|\mathcal{F}^y_0}-(\hat{x}_{0|\mathcal{F}^y_0})_{|\mathcal{F}^y_i}}\big)^k+  \sum_{l=1}^{K} \bar{L}_5^{k,l}\big(\overline{\hat{\bar{x}}_{|\mathcal{F}^y_0}^{l}-(\hat{\bar{x}}_{|\mathcal{F}^y_0}^{l})_{|\mathcal{F}^y_i}}\big)^k \Big]dt, \end{multline}
where the overline symbol with superscript $k$, i.e. $\overline{(.)}^k,$ denotes the infinite-population limit of the average over subpopulation $k$ of the corresponding terms, which are the components of $\tilde{x}^{k, ex}_i$ in \eqref{EstDecompCompact} (see \textit{Proposition 3.1} in \cite{KizilkaleTAC2016} for the convergence analysis in q.m.). A compact representation of \eqref{HeuristicAveEstErrLimit} shall be used as in
\begin{multline}\label{MFmidProcess}
d\bar{x}^k  = \Big[(A_k + B_k L_1^k) \bar{x}^k + (G+B_k L_2^k) x_0+ B_k \Big(\sum_{l=1}^{K} \bar{L}_3^{k,l} \bar{x}^l + L_4^k \hat{x}_{0|\mathcal{F}^y_0}\allowdisplaybreaks\\+ \sum_{l=1}^{K} \bar{L}_5^{k,l} \hat{\bar{x}}_{|\mathcal{F}^y_0}^l + m_k \Big) \Big] dt + \bar{J}_k \bar{\tilde{x}}^{k,ex}dt, 
\end{multline}
where we denote by $\bar{\tilde{x}}^{k,ex}$ the average of the estimation errors of the minor agents of subpopulation $k$ as $N \rightarrow \infty$. Hence, the second bracket in \eqref{HeuristicAveEstErrLimit} is given by $\bar{J}_k \bar{\tilde{x}}^{k,ex}$ (Here the term  $\bar{J}_k \bar{\tilde{x}}^{k,ex}$ corrects its omission in \cite{KizilkaleTAC2016}.). In Section \ref{subsec:MFeq} we will derive the dynamical equation \eqref{EstErrLimit} that  $\bar{\tilde{x}}^{k,ex}$ satisfies. 

Therefore the mean field state vector $\bar{x}$ satisfies 
\begin{equation}\label{MeanFieldEq}
d\bar{x} = \left(\bar{A} \bar{x} + \bar{G} x_0 + \bar{H} \hat{x}_{0|\mathcal{F}^y_0}  + \bar{L} \hat{\bar{x}}_{|\mathcal{F}^y_0}  + \bar{J}\, \bar{\tilde{x}}^{ex} + \bar{m}\right)dt , 
\end{equation}
where $(\bar{\tilde{x}}^{ex})^{T} = [(\bar{\tilde{x}}^{1,ex})^{T}, \dots, (\bar{\tilde{x}}^{K,ex})^{T}]$, and the matrices $\bar{A}$, $\bar{G}$, $\bar{H}$, $\bar{L}$, $\bar{J}$, and $\bar{m}$ collect the corresponding terms in \eqref{MFmidProcess} and have the block matrix form
\begin{gather}
\bar{A} =  \begin{bmatrix}
\bar{A}_1\\
\vdots \\
\bar{A}_K \end{bmatrix} , \quad
\bar{G} = \begin{bmatrix}
\bar{G}_1\\
\vdots \\
\bar{G}_K \end{bmatrix}, \quad 
\bar{H} = \begin{bmatrix}
\bar{H}_1\\
\vdots \\
\bar{H}_K \end{bmatrix}, 
\nonumber \allowdisplaybreaks\\ 
\bar{L} =  \begin{bmatrix}
\bar{L}_1\\
\vdots \\
\bar{L}_K \end{bmatrix}, \quad
\bar{m} = \begin{bmatrix}
\bar{m}_1\\
\vdots \\
\bar{m}_K\end{bmatrix} ,\quad
\bar{J} =  \begin{bmatrix}
\bar{J}_1& & 0\\
&\ddots &\\
0& &\bar{J}_K \end{bmatrix}.\label{MFcoeffUn}
\end{gather}
%We note that $\bar{A}_k,\, \bar{L}_k \in \mb{R}^{n \times nK},\, \bar{G}_k,\,\bar{H}_k \in \mb{R}^{n \times n},\, \bar{m}_k\in \mb{R}^{n},\, \bar{J}_k \in \mb{R}^{n \times (3n+2nK)},\, 1 \leq k \leq K$, are to be solved for using the consistency equations \eqref{mfeqConsistency} that will be derived in Section \ref{subsec:MFeq}. 
with $\bar{A}_k,\, \bar{L}_k \in \mb{R}^{n \times nK},\, \bar{G}_k,\,\bar{H}_k \in \mb{R}^{n \times n},\, \bar{m}_k\in \mb{R}^{n},\, \bar{J}_k \in \mb{R}^{n \times (3n+2nK)},\, 1 \leq k \leq K$.  We use the heuristic mean field equation \eqref{MeanFieldEq} to formulate the stochastic optimal control problems for the agents in the infinite population limit. In Section \ref{subsec:MFeq}, the mean field equation parameters \eqref{MFcoeffUn} are obtained through the consistency equations \eqref{mfeqConsistency}, which effectively equate \eqref{MeanFieldEq} with the mean field resulting from the collective action of the mass of agents.  %are to be solved for using the consistency equations \eqref{mfeqConsistency} that will be derived in Section \ref{subsec:MFeq}. 
%In the end, the mean field equation parameters are obtained through the  consistency equations, these effectively equate the nominal  mean field employed in the initial form of the individual agents? control laws with the mean field resulting from the collective action of the mass of agents using these laws. 

By abuse of language, the mean value of the system's Gaussian mean field given by the state process $(\bar{x})^T = [(\bar{x}^1)^T, ...,(\bar{x}^K)^T]$ shall also be termed the system's mean field.% (The derivation of the properties above may performed using the methods of  \cite{KizilkaleTAC2016}, \cite{Kizilkale2013} and \cite{Huang2010}).
\subsection{Major Agent: Infinite Population}
The major agent's infinite population dynamics, as the number of agents goes to infinity ($N\rightarrow \infty$), remain the same as in \eqref{MajorAgentEq}, while its infinite population individual cost functional is given by
 \begin{gather} \label{MinorAgtCostInfPop}
J_{0}^{\infty} (u_0, u_{-0}) = \mathbb{E} \int_{0}^{\infty} e^{-\rho t} \Big \lbrace \Vert x_0 - \phi(\bar{x}) \Vert^{2}_{Q_0} + \Vert u_0 \Vert_{R_0}^{2} \Big \rbrace dt, \\
\phi(\bar{x}) := H_0^{\pi} \bar{x} + \eta_0, \\
H_0^\pi =  \pi  \otimes H_0 := [\pi_1 H_0,\pi_2 H_0,..., \pi_K H_0],
\end{gather}
where $x^{(N)}$ in \eqref{MajorCostLrgPop} was replaced by its $L^2$ limit, i.e. the mean field $\bar{x}$.  

To solve the infinite population tracking problem for the major agent, its state is extended with the mean field process $\bar{x}$, where this is assumed to exist, i.e. $(x_0^{ex})^T:= \big[x_0^T,\, \bar{x}^T\big]$. %Let the major agent's partial observations equation \eqref{MajorPartialObs} be rewritten as 
%\begin{align} \label{MajorObsExt}
%dy_0(t) = & \mathbb{L}_0 
%\left[ 
%x^T_0,
%\bar{x}^T
%\right]^T dt + R_{v_0}^{\frac{1}{2}} dv_0, \\
%&\mathbb{L}_0  = 
%\left[ \begin{array}{cc}
% l_0^1 &   0_{\ell \times nK}
%\end{array} \right].
%\end{align}

Then the Kalman filter which generates the estimates of the major agent's state $\hat{x}_{0|\mathcal{F}^y_0}$ and the mean field $\hat{\bar{x}}_{|\mathcal{F}^y_0}$ based on its own observations are, respectively, given by 
\begin{gather}
d\hat{x}_{0|\mathcal{F}^y_0} = A_0 \hat{x}_{0|\mathcal{F}^y_0} dt + B_0 \hat{u}_0 dt + K_0^1  d\nu_0,\\
d\hat{\bar{x}}_{|\mathcal{F}^y_0} = (\bar{G} + \bar{H}) \hat{x}_{0|\mathcal{F}^y_0} dt  + (\bar{A} + \bar{L}) \hat{\bar{x}}_{|\mathcal{F}^y_0} dt + \bar{m} dt + K_0^2 d\nu_0,  
\end{gather}
where $\hat{\bar{\tilde{x}}}_{|\mc{F}^y_0}=0$ is used (see \textit{Observation \ref{xTildeEst}}). Moreover, $\bar{m}$ is a deterministic process according to \eqref{MFmidProcess}, $K^1_0$ and $K^2_0$ are the Kalman filter gains, and $\nu_0$ is the innovation process. Therefore the Kalman filter which generates the estimates of the major agent's extended state is given by 
\begin{align}\label{majorKalman}
\left[ \begin{array}{c}
d\hat{x}_{0|\mathcal{F}^y_0}\\
d\hat{\bar{x}}_{|\mathcal{F}^y_0}
\end{array} \right] =  \left[ \begin{array}{cc}
A_0 & 0_{n\times nK}\\
\bar{G} + \bar{H} & \bar{A} + \bar{L}
\end{array} \right] \left[ \begin{array}{c}
\hat{x}_{0|\mathcal{F}^y_0}\\
\hat{\bar{x}}_{|\mathcal{F}^y_0}
\end{array} \right] dt + \left[ \begin{array}{c} 
B_0\\
0_{nK \times m}
\end{array}  \right] \hat{u}
_0 dt \nonumber \allowdisplaybreaks\\+
\left[ \begin{array}{c} 
0_{n \times 1}\\
\bar{m}
\end{array}  \right] dt + K_0 d\nu_0,
\end{align}
with the corresponding Kalman filter gain $K_0 = [(K_0^1)^T, (K_0^2)^T]^T$, and the innovation process $\nu_0$, respectively, given by
\begin{gather} \label{KalmanGainMajor}
K_0 = V_0 \mathbb{L}_0^T R_{v_0}^{- 1},\allowdisplaybreaks\\  \label{InnovationMajor}
 d\nu_0  =  dy_0 - \mathbb{L}_0 \Big[
\hat{x}^T_{0|\mathcal{F}_0^y},
\hat{\bar{x}}^T_{|\mathcal{F}_0^y}
\Big]^Tdt,
\end{gather}
where $\mathbb{L}_0  = \left[ \begin{array}{cc}  l_0^1 & 0_{p \times nK}
\end{array} \right]$, and 
$V_0(t)$ is the solution to the corresponding Riccati equation \eqref{RiccatiEqMajor}. 

From \eqref{MajorAgentEq}, \eqref{MeanFieldEq}, and \eqref{majorKalman} we denote
\begin{gather}
\mathbb{A}_0  = \left[ \begin{array}{cc}
A_0 & 0_{n\times nK} \\
\bar{G} + \bar{H} & \bar{A} + \bar{L}
\end{array} \right],~~~
\mathbb{B}_0 = \left[ \begin{array}{c}
B_0\\
0_{nK \times m} \end{array} \right],~~~
\mathbb{M}_0  = \left[ \begin{array}{c}
0_{n \times 1} \\
\bar{m}
\end{array} \right],\nonumber \allowdisplaybreaks\\\mathbb{D}_0 = \left[ 
\begin{array}{cc}
D_0 & 0_{n \times rK} \\
0_{nK \times r} & 0_{nK \times rK}
\end{array} \right],~~~\mb{J}_0 = \left[\begin{array}{c} 
0_{n \times (3nK+2nK^2)}\\
\bar{J}
\end{array}\right].  \label{M0D0} 
\end{gather}

Then to guarantee the convergence of the solution to the Riccati equation to a positive definite asymptotically stabilizing solution, we assume:
\begin{assumption} $[\mathbb{A}_0, \mathbb{D}_0]$ is stabilizable and $[\mathbb{L}_0, \mathbb{A}_0]$ is detectable.
\end{assumption}
The corresponding Riccati equation is then given by
\begin{equation} \label{RiccatiEqMajor}
\dot{V}_0  = \mathbb{A}_0 V_0 + V_0 \mathbb{A}_0^{T} - K_0R_{v_0}K_0^{T} + \mb{J}_0\bar{V}\mb{J}_0^T +Q_{w_0},
\end{equation}
where $Q_{w_0}=\mb{D}_0\mb{D}_0^T$, $\bar{V}(t) = \mb{E}\big[\bar{\tilde{x}}^{ex}(t)\big({\bar{\tilde{x}}^{ex}}(t)\big)^T \big]$ satisfies \eqref{Vbar}, and $V(0) = \mathbb{E} \big [\big(x_0^{ex}(0) - (\widehat{x _0^{ex}(0)})_{|\mathcal{F}^y_0} \big) \big( x_0^{ex}(0) - (\widehat{{x} _0^{ex}(0)})_{|\mathcal{F}^y_0} \big )^T\big]$.

 Then, utilizing the infinite horizon discounted analogy to \textit{\Cref{thm:StochIndefiniteLQ}}, it can be shown (see \textit{\Cref{Thm: POLQGMM-MFG}} in \Cref{subsec:MFeq}) that the optimal control action for the major agent's tracking problem (and hence best response MFG control input) is  
\begin{equation} \label{UOptMajEst}
\hat{u}_0^\circ = -R_0^{- 1} \mathbb{B}_0^T [\Pi_0 (\hat{x}_{0|\mathcal{F}^y_0}^{T},  \hat{\bar{x}}_{|\mathcal{F}^y_0}^{T})^T + s_0],
\end{equation} 
where $\Pi_0$ and $s_0$ are the solutions to the Riccati and offset equations given by 
\begin{gather}
\rho \Pi_0  =  \Pi_0 \mathbb{A}_0 + \mathbb{A}_0^T \Pi_0 - \Pi_0 \mathbb{B}_0 R_0^{-1} \mathbb{B}_0^T \Pi_0 + Q_0^\pi, \label{majorRiccati}\\
\rho s_{0} = \frac{ds_0}{dt} + (\mathbb{A}_0 - \mathbb{B}_0 R_0^{-1} \mathbb{B}_0^T \Pi_0 )^T s_0 + \Pi_0 \mathbb{M}_0 - \bar{\eta}_0,\label{majorOffset}
\end{gather}
with $\bar{\eta}_0 = [I_{n \times n}, -H_0^\pi]^TQ_0\eta_0$ and $Q_0^\pi = [I_{n \times n}, -H_0^\pi]^T Q_0 [I_{n \times n}, -H_0^\pi]$. We note $\tfrac{ds_0}{dt}=0$ in \eqref{majorOffset}, since $\mathbb{M}_0,\, \bar{\eta}_0$ are constant.

%\begin{align}
%V(0) = \mathbb{E} \Big [x_0^{\bar{x}}(0) - (\widehat{x _0^{\bar{x}}(0)})_{|\mathcal{F}^y_0} \Big ] \Big [x_0^{\bar{x}}(0) - (\widehat{{x} _0^{\bar{x}}(0)})_{|\mathcal{F}^y_0} \Big ]^T .
%\end{align}

%Q_0^\pi & = [I_{n \times n}, -H_0^\pi]^T Q_0 [I_{n \times n}, -H_0^\pi]
%\begin{align}
%&\bar{\eta}_0 = [I_{n \times n}, -H_0^\pi]^TQ_0\eta_0,\nonumber \\
%Q_0^\pi & = [I_{n \times n}, -H_0^\pi]^T Q_0 [I_{n \times n}, -H_0^\pi], \nonumber
%\end{align}
Finally, the joint dynamics of the major agent's closed-loop system and its Kalman filter system are given by
\begin{equation} \label{MajorClosedLoop}
\begin{bmatrix}
dx_0^{ex} \\
d\hat{x}_{0|\mc{F}^y_0}^{ex} \end{bmatrix}
= \Big(\mathbf{A}_0  \begin{bmatrix}
x_0^{ex} \\
\hat{x}_{0|\mc{F}^y_0}^{ex}
 \end{bmatrix} + \mathbf{J}_0\bar{\tilde{x}}^{ex}+ \mathbf{M}_0\!\Big) dt + \mathbf{D}_0
\begin{bmatrix}
\begin{bmatrix}
dw_0\\
0_{rK \times 1}\end{bmatrix} \\
d\nu_0
\end{bmatrix}, 
\end{equation}
where
\begin{gather*}
\mathbf{A}_0 = \left[ \begin{array}{cc}
\left[ \begin{array}{cc} 
A_0 & 0_{n \times nK} \\
\bar{G} &  \bar{A}
\end{array} \right]  &  \left[ \begin{array}{c}
 -B_0 R_0^{-1} \mathbb{B}_0^T \Pi_0\\
\left[ \begin{array}{cc}
\bar{H} & \bar{L} \end{array} \right] \end{array} \right], \\
0_{(n+nK)\times(n+nK)}  & \mathbb{A}_0 -\mathbb{B}_0 R_0^{-1} \mathbb{B}_0^T \Pi_0
\end{array} \right],\quad \mathbf{J}_0 = \left[ \begin{array}{c}
\left[\begin{array}{c}
0_{n \times (3nK+2nK^2)}\\
\bar{J}\end{array}\right]\\
0_{(n+nK)\times (3nK+2nK^2)}
 \end{array}\right], 
 \nonumber\\
\mathbf{M}_0 =  \left[\hspace{-1mm} \begin{array}{c}
\mathbb{M}_0 -\mathbb{B}_0 R_0^{-1} \mathbb{B}_0^T s_0\\
\mathbb{M}_0 -\mathbb{B}_0 R_0^{-1} \mathbb{B}_0^T s_0
\end{array} \hspace{-1mm}\right], \,\,\,\,
\mathbf{D}_0 = \left[ \hspace{-1mm}\begin{array}{cc}
\mathbb{D}_0 & 0_{(n+nK)\times p} \\
 0_{(n+nK)\times(r+rK)} & K_0
\end{array} \hspace{-1mm} \right]. 
\end{gather*}
%\left[ \begin{array}{cc} 
%A_0 & 0_{n \times nK} \\
%\bar{A} & \bar{G} 
%\end{array} \right] & \left[ \begin{array}{c}
% -B_0 R_0^{-1} \mathbb{B}_0^T \Pi_0\\
% [\bar{H}, \bar{L}] 
% \end{array} \right] \\
%[0_{n + nK \times n} , 0_{(n + nK) \times nK}] & \mathbb{A}_0 -B_0 R_0^{-1} \mathbb{B}_0^T \Pi_0

%\left[ \begin{array}{cc} 
%A_0 & 0_{n \times nK} \\
%\bar{A} & \bar{G} 
%\end{array} \right] & \left[ \begin{array}{c}
% -B_0 R_0^{-1} \mathbb{B}_0^T \Pi_0\\
% [\bar{H}, \bar{L}] 
% \end{array} \right] \\
%[0_{n + nK \times n} , 0_{(n + nK) \times nK}] & \mathbb{A}_0 -B_0 R_0^{-1} \mathbb{B}_0^T \Pi_0

\subsection{Minor Agent: Infinite Population}
A generic minor agent's infinite population dynamics, as the number of agent goes to infinity ($N\rightarrow \infty$), remain the same as in \eqref{MinorAgentEq}, while its  infinite population individual cost functional is given as
\begin{gather}
J_{i}^{\infty}(u_{i}, u_{0}) = \mathbb{E} \int_{0}^{\infty}  e^{-\rho t} \Big\{ \Vert x_{i} - \mathrm{\psi}(\bar{x},x_0)\Vert _{Q}^2 + \Vert u_i \Vert_{R}^2 \Big\} dt,\allowdisplaybreaks\\
\mathrm{\psi}(\bar{x},x_0) = H_1 x_0 + H_2^{\pi} \bar{x} + \eta,\allowdisplaybreaks\\
H_{2}^{\pi} = \pi \otimes H_2 := [\pi_1 H_2, \pi_2 H_2, ..., \pi_K H_2].\allowdisplaybreaks
\end{gather} 
In the case where all agents have partial observations on the major agent's state, the joint dynamics of the major agent's closed-loop system and its Kalman filtering recursions are employed in order to solve the minor agent's tracking problem. Before proceeding we enunciate \textit{Proposition 1}, where for ease of exposition, the simple case where it is assumed that the major agent and minor agents are not coupled with the mean field (neither in their dynamics nor in their cost functional) is considered. However, each minor agent is assumed to be coupled with the major agent's state in their cost functional. The results are  extendable to the more general case described by \eqref{MajorAgentEq}-\eqref{MinorAgentEq} and \eqref{MajorCostLrgPop}-\eqref{MinorCostLrgPop}, in a straightforward way.

\begin{proposition}(Estimates of Estimates Filter) \label{props: EstofEst}
Let the major agent's dynamics be given by
\begin{align}
 dx_0 = A_0 x_0 dt + B u_0 dt + dw_0,
\end{align}
 and the major agent's observations of its own state by 
\begin{align}
dy_0 = H_0 x_0dt + dv_0,
\end{align}
 then the estimates of the major agent's state based on its own observation is generated by 
\begin{align} \label{LemKF}
&d\hat{x}_{0|\mathcal{F}^y_0} = A_0 \hat{x}_{0|\mathcal{F}^y_0}dt + B \hat{u}_0 dt+ K_0 [dy_0 - H_0 \hat{x}_0 dt] := \nonumber\\ 
&A_0 \hat{x}_{0|\mathcal{F}^y_0} dt + B \hat{u}_0 dt + K_0 [H_0 x_0 dt + dv_0 - H_0 \hat{x}_{0|\mathcal{F}^y_0}dt].
\end{align}
Next, assume the major agent's control action is of the form
\begin{align}
u_0 = -L\hat{x}_{0|\mathcal{F}^y_0},
\end{align}
then in this case the joint dynamics of the major agent's closed-loop system and its Kalman filter system are given by
\begin{align} \label{MajorJoint}
\left[ \begin{array}{c}
dx_0\\
d\hat{x}_{0|\mathcal{F}^y_0} 
\end{array} \right] =
\left[ \begin{array}{cc}
A_0 & -BL\\
K_0 H_0 & A_0-BL-K_0 H_0 
\end{array} \right] 
& \left[ \begin{array}{c}
x_0\\
\hat{x}_{0|\mathcal{F}^y_0} 
\end{array} \right]  dt  \nonumber\\
+ \left[ \begin{array}{c}
0\\
K_0 dv_0
\end{array} \right] + 
&\left[ \begin{array}{c}
dw_0\\
0
\end{array} \right].
\end{align}
Finally, let the minor agent's partial observations of the major agent's state be given by
\begin{align} \label{LemMinorObs}
dy_i  =  H_i x_0dt + dv_i
& = \left[ \begin{array}{cc}
H_i & 0
\end{array} \right] 
\left[ \begin{array}{c}
x_0\\
\hat{x}_{0|\mathcal{F}^y_0} 
\end{array} \right]dt + dv_i,
\end{align}
then the process of estimates of the state of \eqref{MajorJoint} based upon the observations \eqref{LemMinorObs}  is generated by the filtering scheme
\begin{align} \label{MajorJointEst}
&\hspace{-4mm} \left[ \begin{array}{c}
\hspace{-1mm} d\hat{x}_{0|\mathcal{F}^y_i}\hspace{-1mm}\\
\hspace{-1mm} d(\hat{x}_{0|\mathcal{F}^y_0})_{|\mathcal{F}^y_i} \hspace{-1mm}
\end{array} \right] \hspace{-1mm} = \hspace{-1mm}
\left[ \begin{array}{cc}
\hspace{-1mm} A_0 & -BL \hspace{-1mm}\\
\hspace{-1mm} K_0 H_0 & A_0-BL-K_0 H_0 \hspace{-1mm}
\end{array} \right]  \hspace{-1mm}
\left[ \begin{array}{c}
\hspace{-1mm} \hat{x}_{0|\mathcal{F}^y_i} \hspace{-1mm}\\
\hspace{-1mm} (\hat{x}_{0|\mathcal{F}^y_0})_{|\mathcal{F}^y_i} \hspace{-1mm} 
\end{array} \right]dt  \nonumber \\
&\quad \quad + K_i \bigg ( dy_i- 
\left[ \begin{array}{cc}
H_i & 0
\end{array} \right]
\left[ \begin{array}{c}
d\hat{x}_{0|\mathcal{F}^y_i}\\
d(\hat{x}_{0|\mathcal{F}^y_0})_{|\mathcal{F}^y_i} 
\end{array} \right]dt \bigg),
\end{align}
where $\hat{x}_{0|\mathcal{F}^y_i} := \mathbb{E}_{|\mathcal{F}^y_i} x_0 = \mathbb{E} \{ x_0| \mathcal{F}^y_i\}$ denotes the minor agent $\mathcal{A}_i$'s estimate of the major agent's state, and 
$$(\hat{x}_{0|\mathcal{F}^y_0})_{|\mathcal{F}^y_i} := \mathbb{E}_{| \mathcal{F}^y_i} \hat{x}_{0|\mathcal{F}^y_0} =\mathbb{E} \{ \hat{x}_{0|\mathcal{F}^y_0} |  \mathcal{F}^y_i  \},$$
denotes the minor agent $\mathcal{A}_i$'s estimate of the major agent's estimate of its own state.
\hfill $\square$
\end{proposition}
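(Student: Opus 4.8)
The plan is to recognize the proposition as a state-augmentation result: once the major agent's own-state estimate $\hat{x}_{0|\mathcal{F}^y_0}$ is adjoined to $x_0$ to form the extended state $z := [x_0^T,\, \hat{x}_{0|\mathcal{F}^y_0}^T]^T$, the minor agent's estimation problem collapses to a standard Kalman--Bucy filtering problem, and the two block components of the resulting estimate are precisely $\hat{x}_{0|\mathcal{F}^y_i}$ and $(\hat{x}_{0|\mathcal{F}^y_0})_{|\mathcal{F}^y_i}$.

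First I would establish the joint dynamics \eqref{MajorJoint}. Substituting $u_0 = -L\hat{x}_{0|\mathcal{F}^y_0}$ into the major agent's dynamics gives the first row. For the second row, note that since $u_0$ is $\mathcal{F}^y_0$-measurable one has $\hat{u}_0 = \mathbb{E}\{u_0 \mid \mathcal{F}^y_0\} = -L\hat{x}_{0|\mathcal{F}^y_0}$; inserting this together with the innovation $dy_0 - H_0\hat{x}_{0|\mathcal{F}^y_0}\,dt = H_0 x_0\,dt + dv_0 - H_0\hat{x}_{0|\mathcal{F}^y_0}\,dt$ into \eqref{LemKF} and collecting the coefficients of $x_0$ and $\hat{x}_{0|\mathcal{F}^y_0}$ yields the second row, driven by the noise $K_0\,dv_0$. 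This is a purely algebraic rearrangement.

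Next I would observe that \eqref{MajorJoint} together with the observation \eqref{LemMinorObs}, written as $dy_i = [H_i\ 0]\,z\,dt + dv_i$, constitutes a linear time-invariant Gaussian state-space model for the minor agent, with system matrix $\mathcal{A} := \left[\begin{smallmatrix} A_0 & -BL \\ K_0 H_0 & A_0 - BL - K_0 H_0 \end{smallmatrix}\right]$ and measurement matrix $[H_i\ 0]$. The process noise of $z$ is $[dw_0^T,\,(K_0\,dv_0)^T]^T$, and by \textit{Assumption \ref{independence_ass}} the minor agent's measurement noise $v_i$ is independent of both $w_0$ and $v_0$; hence process and measurement noise are uncorrelated and the hypotheses of the Kalman--Bucy filtering theorem are met. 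Applying that theorem gives the optimal mean-square estimate $\hat{z}_{|\mathcal{F}^y_i} = \mathbb{E}\{z \mid \mathcal{F}^y_i\}$ as the solution of $d\hat{z}_{|\mathcal{F}^y_i} = \mathcal{A}\,\hat{z}_{|\mathcal{F}^y_i}\,dt + K_i\big(dy_i - [H_i\ 0]\,\hat{z}_{|\mathcal{F}^y_i}\,dt\big)$, with gain $K_i = V_i[H_i\ 0]^T R_{v_i}^{-1}$ and $V_i$ the solution of the associated filtering Riccati equation. By linearity of conditional expectation the two block components of $\hat{z}_{|\mathcal{F}^y_i}$ are $\mathbb{E}\{x_0\mid\mathcal{F}^y_i\} = \hat{x}_{0|\mathcal{F}^y_i}$ and $\mathbb{E}\{\hat{x}_{0|\mathcal{F}^y_0}\mid\mathcal{F}^y_i\} = (\hat{x}_{0|\mathcal{F}^y_0})_{|\mathcal{F}^y_i}$, which is exactly the filter \eqref{MajorJointEst}.

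The step requiring the most care is not the computation but the conceptual justification that the minor agent may legitimately treat $\hat{x}_{0|\mathcal{F}^y_0}$ as a bona fide state coordinate: this presumes that the minor agent knows the major agent's gains $K_0$ and $L$, so that the drift $\mathcal{A}$ is available to it, and, crucially, that adjoining the major agent's self-estimate \emph{closes} the system so that no further ``estimate of an estimate of an estimate'' is required. The latter is precisely the point flagged in the introduction that the asymmetric major--minor structure forestalls an infinite regress: $\hat{x}_{0|\mathcal{F}^y_0}$ evolves through a filter driven only by $(x_0, v_0)$ and the known gains, so the pair $(x_0,\, \hat{x}_{0|\mathcal{F}^y_0})$ is self-contained and finite-dimensional, and the minor agent's filter over this pair needs nothing beyond it.
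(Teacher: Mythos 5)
Your proposal is correct and follows essentially the same route the paper intends: the paper omits the proof as ``straightforward'' state augmentation followed by a standard Kalman--Bucy filter, and explicitly flags as the key property exactly what you identify, namely that $\{w_0, v_0\}$ are independent of the minor agent's observation noise $v_i$ so that the augmented pair $(x_0,\hat{x}_{0|\mathcal{F}^y_0})$ forms a bona fide Gauss--Markov state process for the minor agent's filtering problem. Your additional remarks on the minor agent knowing the gains $K_0, L$ and on the closure of the augmented system (no infinite regress) are consistent with the paper's discussion surrounding the proposition.
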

 The proof of the Proposition \ref{props: EstofEst} is straightforward and will be omitted, but we observe that the key property of the overall system which ensures its validity is that the Wiener processes $\{w_0 ,v_0\}$ are independent of the noise process $v_i$ in \eqref{MajorJointEst}.

 Returning to the main problem, the minor agent's state is next extended to form $(x_i^{ex})^T := \big[x_i^T,~ x_0^T, ~\bar{x}^T,~\hat{x}_{0|\mathcal{F}^y_0}^T,~ \hat{\bar{x}}_{|\mathcal{F}^y_0}^T\big]$. Specifically this yields
\begin{equation} \label{minorSys_noUhat}
dx_i^{ex} = \left(\mathbb{A}_k x_i^{ex} +\mathbb{B}_k u_i + \mathbb{J} \bar{\tilde{x}}^{ex}+  \mathbb{M}\right) dt + \mathbb{D} [dw_i^T, dw_0^T, 0_{1\times rK}, d\nu_0^T]^T,
\end{equation}

where
 \begin{gather}
\mathbb{A}_k  =  \left[\hspace{-1mm}
\begin{array}{ccc}
A_k & [\begin{array}{cc}
G & 0_{n \times (n+2nK)}
\end{array}]  \\
0_{2(n+nK)\times n} & \mathbf{A}_0
\end{array} \hspace{-1mm}\right], \quad \mathbb{B}_k =  \left[\hspace{-1mm} \begin{array}{c}
B_k\\
0_{2(n+nK) \times m}
\end{array} \hspace{-1mm}\right], \nonumber \allowdisplaybreaks\\
\mathbb{J} = \left[\begin{array}{c}
0_{n \times (3nK+2nK^2)}\\
\mathbf{J}_0 
\end{array}\right], \quad
\mathbb{M} = \left[ \begin{array}{c} 
0_{n\times 1} \\
\mathbf{M}_0
\end{array} \right], \quad \mathbb{D} =  
\left[ \begin{array}{ccc}
D & 0_{n \times (r+rK+p)}\\
0_{2(n+nK) \times r}  & \mathbf{ D}_0
\end{array} \right]. 
\end{gather}
To derive the Kalman filter equations for \eqref{minorSys_noUhat}, we first define $\mb{L}_k = \begin{bmatrix}l_k^1& l_k^2 & 0_{p \times (n+2nK)}\end{bmatrix}$.
%Denote the minor agent's system noise covariance matrix by 
%$Q_{w}^{k}= \mathbb{D} \mathbb{D}^T$
\begin{comment}
\begin{align*}
Q_{w}^{k}& = \mathbf{D} \mathbf{D}^T
 = \left[ \begin{array}{cccc}
\Sigma_{k} & 0 & 0 & 0\\
0 & \Sigma_0 & 0 & 0\\
0 & 0 & 0 & 0\\
0 & 0 & 0 & \hat{\Sigma}
\end{array} \right],
\end{align*}
\end{comment}
%and its measurement noise covariance matrix by $R_v$. 
To guarantee the convergence of the solution to the Riccati equation to a positive definite asymptotically stabilizing solution, we assume: 
\begin{assumption} \label{ass: RiccatiMinor}
The system parameter set $\Theta = \{1,...,K\}$ is such that $[\mathbb{A}_k, \mathbb{D}]$ is stabilizable and $[\mathbb{L}_{k}, \mathbb{A}_k]$ is detectable for all $k$, $1 \leq k \leq K$.
\end{assumption}

The Riccati equation associated with the filtering equations for \eqref{minorSys_noUhat} is then given by
\begin{equation} \label{RiccatiEqMinor}
\dot{V}_i = \mathbb{A}_k V_i + V_i \mathbb{A}_k^T - K_i R_v K_i^T +\mb{J}\bar{V}\mb{J}^T + Q_{w},
\end{equation}
where $Q_{w}= \mathbb{D} \mathbb{D}^T$, $\bar{V}(t) = \mb{E}\big[\bar{\tilde{x}}^{ex}(t)\big({\bar{\tilde{x}}^{ex}}(t)\big)^T \big]$ satisfies \eqref{Vbar}, and $V_k(0) = \mathbb{E} \big [\big( x_i^{ex}(0) - (\widehat{x_i^{ex}(0)})_{|\mathcal{F}^y_i}\big)\big( x_i^{ex}(0) - (\widehat{x_i^{ex}(0)})_{|\mathcal{F}^y_i}\big)^T\big ]$. The Kalman filter gain $K_i$ is in turn given by 
\begin{equation}
K_i  =  V_i \mathbb{L}_{k}^{T} R_{v}^{- 1},
\end{equation}
and the innovation process $\nu_i(t)$ is defined as in
\begin{equation} \label{MinorInnovation}
d\nu_i  \hspace{-0.5mm} = \hspace{-0.5mm} dy_i - \mathbb{L}_{k} \hspace{-1mm}\left[
\hat{x}^T_{ i| \mathcal{F}^y_i}, 
\hat{x}^T_{ 0| \mathcal{F}^y_i},
\hat{\bar{x}}^T_{|\mathcal{F}^y_i},
(\hat{x}_{0|\mathcal{F}^y_0})_{|\mathcal{F}^y_i}^T,
(\hat{\bar{x}}_{|\mathcal{F}^y_0})_{|\mathcal{F}^y_i}^T
\right]^T \hspace{-2mm} dt,
\end{equation}
where $(\hat{x}_{0|\mathcal{F}^y_0})_{|\mathcal{F}^y_i}$ and 
$(\hat{\bar{x}}_{|\mathcal{F}^y_0})_{|\mathcal{F}^y_i}$, respectively, denote the minor agent $\mc{A}_i$'s estimates of the major agent's estimates of its own state and the mean field. Then the Kalman filter equations for a generic minor agent $\mathcal{A}_i, \, 1 \leq i \leq N$, are given as in
\begin{equation} \label{minorKalman}
d\hat{x}^{ex}_{i|\mathcal{F}^y_i} = \mathbb{A}_k \hat{x}^{ex}_{i|\mathcal{F}^y_i} dt + \mathbb{B}_k \hat{u}_i dt + \mathbb{M} dt + K_i d\nu_i,
\end{equation} 
where $\hat{\bar{\tilde{x}}}^{ex}_{|\mc{F}^y_i}=0$ (see \textit{Observation \ref{xTildeEst}}) is used.
Clearly, \eqref{minorKalman} generates the iterated estimates $(\hat{x}_{0|\mathcal{F}^y_0})_{|\mathcal{F}^y_i}$ and $(\hat{\bar{x}}_{|\mathcal{F}^y_0})_{|\mathcal{F}^y_i}$ which are required to calculate $\hat{x}_{0|\mathcal{F}^y_i}$ and $\hat{\bar{x}}_{|\mathcal{F}^y_i}$ (see \textit{Proposition \ref{props: EstofEst}} for a simplified case of Estimates of Estimates Filter). 
%Before proceeding we enunciate \textit{Proposition 1}, where for ease of exposition, the simple case where it is assumed that the major agent and minor agents are not coupled with the mean field (neither in their dynamics nor in their cost functional) is considered. However, each minor agent is assumed to be coupled with the major agent's state in their cost functional. The results are  extendable to the more general case described by \eqref{MajorAgentEq}-\eqref{MinorAgentEq} and \eqref{MajorCostLrgPop}-\eqref{MinorCostLrgPop}, in a straightforward way.
\begin{remark}
By virtue of the asymmetric information available to the major agent and a generic minor agent, an infinite regress does not occur in the process of estimating other agents' states. In fact to calculate the best response action, the major agent only estimates its own state and hence does not estimate minor agents' states, while each minor agent estimates its own state and the major agent's state.
\end{remark}

We note that by  \textit{Assumption \ref{ass:MajorControl}} the minor agent $\mathcal{A}_i$ is able to estimate $\hat{u}_0^\circ$ whenever the functional dependence of the major agent's control on it's state is available to the minor agent through forming the conditional expectation of the major agent's control action which by \eqref{UOptMajEst} is given by the following expression    
\begin{equation} \label{Uhathat}
(\hat{u}_0^\circ)_{|\mathcal{F}^y_i} := \mathbb{E} \lbrace \hat{u}_0^\circ | \mathcal{F}^y_i\rbrace = 
 -R^{-1} \mathbb{B}_0^T \Big [\Pi_0 \Big( (\hat{x}_{0| \mathcal{F}^y_0})_{|\mathcal{F}^y_i}^T, 
(\hat{\bar{x}}_{|\mathcal{F}^y_0})_{|\mathcal{F}^y_i}^T\Big) + s_0 \Big],
\end{equation}
and which is embedded in \eqref{minorKalman}. 
Then, utilizing the infinite horizon discounted analogy to  \textit{\Cref{thm:StochIndefiniteLQ}}, it can be shown (see \textit{\Cref{Thm: POLQGMM-MFG}}) that the optimal control action for the minor agent $\mathcal{A}_i$'s tracking problem (and hence best response MFG control input) is given by
\begin{equation}\label{minorCntrl}
\hat{u}_i^\circ = -R^{-1} \mathbb{B}_k^T 
 \left [\Pi_k \Big( \hat{x}_{i|\mathcal{F}_i^y}^T,\hat{x}_{0|\mathcal{F}_i^y}^T,\hat{\bar{x}}_{|\mathcal{F}_i^y}^T, (\hat{x}_{0|\mathcal{F}^y_0})_{|\mathcal{F}^y_i}^T,
 (\hat{\bar{x}}_{|\mathcal{F}^y_0})_{|\mathcal{F}^y_i}^T \Big)^T+s_k\right],
\end{equation}
where the iterated estimation terms $(\hat{x}_{0|\mathcal{F}^y_0})_{|\mathcal{F}^y_i}$, and $(\hat{\bar{x}}_{|\mathcal{F}^y_0})_{|\mathcal{F}^y_i}$  explicitly appear, and the corresponding Riccati and offset equations are given by 
\begin{gather}
\rho \Pi_k  = \Pi_k \mathbb{A}_k + \mathbb{A}_k^T \Pi_k - \Pi_k \mathbb{B}_k R^{-1} \mathbb{B}_k^T \Pi_k + Q^{\pi},~ \forall k, \label{minorRiccati}\allowdisplaybreaks\\
\rho s_{k} = \frac{ds_k}{dt} + (\mathbb{A}_k - \mathbb{B}_k R^{-1} \mathbb{B}_k^T \Pi_k )^T s_k + \Pi_k \mathbb{M} - \bar{\eta},~ \forall k, \label{minorOffset}
\end{gather}
with $\bar{\eta} = [I_{n \times n},\,-H_1,\,-H_2^\pi,\,0_{n \times (n+nK)} ]^T Q \eta$, and $Q^{\pi} = [I_{n\times n},\,-H_1,\, -H_2^{\pi},\, 0_{n \times (n+nK)}]^T Q [I_{n \times n},\,-H_1,\,-H_2^{\pi},\, 0_{n \times (n+nK)}]$. We note $\tfrac{ds_k}{dt}=0$ in \eqref{minorOffset}, since $\mathbb{M},\, \bar{\eta}$ are constant. 

%Finally \eqref{minorCntrl} is substituted in \eqref{minorSys_noUhat} to form the closed-loop dynamics for the minor agent's extended system whose solution is given by 
%\begin{multline}\label{minorExtDynSol}
%x_i^{ex}(t) = \Phi_k(t,0) x_i^{ex}(0) + \int_{0}^t \Phi_k(t,\tau)\mathbb{M}(\tau)d\tau +  \int_{0}^t \Phi_k(t,\tau) \hat{u}^{\circ}_i(\tau)d\tau \\+\int_{0}^t \Phi_k(t,\tau) \mathbb{D} [dw_i^T(\tau), dw_0^T(\tau), 0_{nk\times1}^T, dv_0^T(\tau)]^T,
%\end{multline}
%where $\Phi_k(t,\tau)=\exp[\mb{A}_k(t-\tau)]$.
%\begin{align}
%&\bar{\eta} = [I_{n \times n},~-H,~-H_2^\pi ]^T Q \eta, \nonumber\\
%Q^{\pi} = &[I_{n\times n},~-H_1, -H_2^{\pi}]^T Q [I_{n \times n},~-H_1,~-H_2^{\pi}].\nonumber
%\end{align}
\subsection{Mean Field Consistency Equations} \label{subsec:MFeq}
Let us denote the components of $\Pi_k$ in \eqref{minorRiccati} as
\begin{align}
\Pi_k = \left[ \begin{array}{ccccc}\label{Pi_decomp}
\Pi_{k,11} & \Pi_{k,12} & \Pi_{k,13} & \Pi_{k,14} &  \Pi_{k,15}  \\
\Pi_{k,21} & \Pi_{k,22} & \Pi_{k,23} &  \Pi_{k,24} &  \Pi_{k,25}
%\Pi_{k,31} & \Pi_{k,32} & \Pi_{k,33} &  \Pi_{k,34} &  \Pi_{k,35}\\
%\Pi_{k,41} & \Pi_{k,42} & \Pi_{k,43} & \Pi_{k,44} & \Pi_{k,45} \\
%\Pi_{k,51} & \Pi_{k,52} & \Pi_{k,53} & \Pi_{k,54} & \Pi_{k,55}
\end{array} \right], 
\end{align}
$1 \leq k \leq K$, and where $\Pi_{k,11},\, \Pi_{k,12}, \, \Pi_{k,14} \in \mathbb{R}^{n \times n}$, $\Pi_{k,13},\, \Pi_{k,15} \in \mathbb{R}^{n \times nK}$, $\Pi_{k,21},\, \Pi_{k,22},\, \Pi_{k,24} \in \mathbb{R}^{2(n+nK) \times n}$, and $\Pi_{k,23}, \, \Pi_{k,25} \in \mathbb{R}^{2(n+nK) \times nK}$. Let us also define the block matrix $\mathbf{e}_{k,q} = [0_{q \times q}, ..., 0_{q \times q}, I_q, 0_{q \times q}, ..., 0_{q \times q}]$ with $K$ blocks, where the identity matrix $I_q$ is located at the $k$th block.  Finally we define the block matrix $\mathbf{1}_q=[I_{q}, ..., I_{q},...,I_{q}]$ with $K$ blocks of identity matrix. Then we denote by 
\begin{gather}
\bar{\mathbf{e}}_{k} = \mathbf{e}_{k,n},\\
\tilde{\mathbf{e}}_{k} = \mathbf{e}_{k,(3n+2nK)},\\
\tilde{\mathbf{1}} = \mathbf{1}_{(3n+2nK)}
\end{gather}

To obtain the mean field consistency equations, we substitute \eqref{minorCntrl} in \eqref{MinorAgentEq} to get 
 \begin{equation}\label{minorClosedSys_allPO}
dx_i = A_k x_i dt+ G x_0 dt  - B_k R^{-1} \mathbb{B}_k^T 
 \big[\Pi_k \hat{x}_{i|\mc{F}^y_i}^{ex}+s_k\big] dt + D dw_i.\end{equation} 
Then $\hat{x}_{i|\mc{F}^y_i}^{ex}$ can be written as
\begin{align}
\hat{x}_{i|\mc{F}^y_i}^{ex} &= -(x_i^{ex}-\hat{x}_{i|\mc{F}^y_i}^{ex}) + x_i^{ex},\nonumber\\
&= -\tilde{x}^{ex}_i +  x_i^{ex},\label{stateDecomp}
\end{align}
where $\tilde{x}^{ex}_i$ denotes the estimation error, and the governing dynamics in the steady state for $1\leq i\leq N$, $1\leq k \leq K$, are given by 
\begin{equation}\label{estErrDyn}
d \tilde{x}^{k,ex}_i = (\mb{A}_k-K_k\mb{L}_k)\tilde{x}^{k,ex}_i+ \mathbb{J}\bar{\tilde{x}}^{ex} dt- K_k \sigma_vdv_i +\mb{D}[dw_i^T, dw_0^T, 0_{1\times rK}, d\nu_0^T]^T, 
\end{equation}
 where $(\bar{\tilde{x}}^{ex})^T= [(\bar{\tilde{x}}^{1,ex})^T, \dots,(\bar{\tilde{x}}^{K,ex})^T]$ satisfies \eqref{estErrVecDynCompact}.

  Next the empirical average of \eqref{minorClosedSys_allPO}, where \eqref{stateDecomp} has been substituted, over the population of the minor agents of type $k$ is given by 
  \begin{multline}
d(\frac{1}{N_k} \sum_{i\in \I_k} x_i^{k}) = D\frac{1}{N_k} \sum_{i\in \I_k}dw_i + A_k (\frac{1}{N_k} \sum_{i\in \I_k} x_i^{k}) dt  + G x_0 dt\\  \hspace{-1.4cm}- B_k R^{-1} \mathbb{B}_k^T \Big [\Pi_k \big(-\frac{1}{N_k} \sum_{i\in \I_k} \tilde{x}_{i}^{k,ex}+ \frac{1}{N_k} \sum_{i\in \I_k} {x}_{i}^{k,ex}\big) +s_k\Big] dt. \label{EmpAveClosedMinor_allPO}
\end{multline} 
As $N_k\rightarrow \infty$, the solution to \eqref{EmpAveClosedMinor_allPO} converges, in quadratic mean, to the solution of 
 \begin{equation}\label{EmpAveClosedMinor_allPO_limit}
d \bar{x}^{k} = A_k \bar{ x}^{k} dt  + G x_0 dt- B_k R^{-1} \mathbb{B}_k^T \Big [\Pi_k \big(- \bar{\tilde{x}}^{k,ex}+ \bar{x}^{k,ex}\big) +s_k\Big] dt,
\end{equation}
where $\bar{x}^{k,ex} = \big [(\bar{x}^k)^T, x_0^T, \bar{x}^T, \hat{x}_{0|\mc{F}^y_0}^T, \hat{\bar{x}}_{|\mc{F}^y_0}^T\big]^T$, and from \eqref{estErrDyn} $\bar{\tilde{x}}^{k,ex}$ (the average of estimation error over subpopulation $k$ as $N\rightarrow \infty$) satisfies  

\begin{equation}\label{EstErrLimit} 
d \bar{\tilde{x}}^{k,ex} = \big[(\mb{A}_k-K_k\mb{L}_k) \bar{\tilde{x}}^{k,ex} + \mathbb{J}\bar{\tilde{x}}^{ex}\big]dt +\mb{D}\big[0_{1\times r}, dw_0^T, 0_{1\times rK}, d\nu_0^T\big]^T.  
\end{equation}
Note that in the derivation of \eqref{EstErrLimit}, we use the property that $\frac{1}{N_k}\sum_{i\in \I_k} w_0 = w_0$ and $\frac{1}{N_k}\sum_{i\in \I_k}\nu_0 = \nu_0$, since $w_0$ and $\nu_0$ are the common processes shared between all agents of type $k$. Moreover, the law of large numbers is used to obtain as $N_k \rightarrow \infty$ 
\begin{gather*}
\frac{1}{N_k}\sum_{i\in \I_k}K_k d\nu_i \xrightarrow{\text{q.m.}} 0, \quad
\frac{1}{N_k}\sum_{i\in \I_k}dw_i \xrightarrow{\text{q.m.}} 0. \end{gather*}

Subsequently, from \eqref{EstErrLimit},  $(\bar{\tilde{x}}^{ex})^T= [(\bar{\tilde{x}}^{1,ex})^T, \dots,(\bar{\tilde{x}}^{K,ex})^T]$ satisfies
   \begin{equation}\label{estErrVecDyn}
d\bar{\tilde{x}}^{ex} = \begin{bmatrix}
(\mb{A}_1-K_1\mb{L}_1)\tilde{\mathbf{e}}_1+ \mathbb{J}\\
\vdots\\
(\mb{A}_K-K_k\mb{L}_k)\tilde{\mathbf{e}}_K+ \mathbb{J}\\
\end{bmatrix}\bar{\tilde{x}}^{ex} dt+ \begin{bmatrix}
\mb{D} \\
 \vdots  \\ 
 \mb{D}\end{bmatrix} \begin{bmatrix}
 0_{r \times 1}\\
  dw_0\\
 0_{rK\times 1}\\
 d\nu_0
 \end{bmatrix}, 
\end{equation}
or equivalently in the compact form 
\begin{equation}\label{estErrVecDynCompact}
d\bar{\tilde{x}}^{ex} = \tilde{\mb{A}}\bar{\tilde{x}}^{ex}dt + \tilde{\mb{D}}\big[0_{1\times r}, dw_0^T, 0_{1\times rK}, d\nu_0^T\big]^T.
\end{equation}

Using \eqref{Pi_decomp} the mean field equation \eqref{EmpAveClosedMinor_allPO_limit} can be presented as 
\begin{multline}
d \bar{x}^{k} = \Big( \big[A_k - B_k R^{-1} B_k^T \Pi_{k,11}\big] \bar{\mathbf{e}}_k - B_k R^{-1} B_k^T \Pi_{k,13}\Big) \bar{ x} dt  +\Big(G-B_k R^{-1} B_k^T \Pi_{k,12}\Big) x_0 dt \\-B_kR^{-1} \Big(B_k^T \Pi_{k,14}\hat{x}_{0|\mc{F}^y_0}+ B_k^T \Pi_{k,15}\hat{\bar{x}}_{|\mc{F}^y_0} - \mb{B}_k^T\Pi_k \bar{\tilde{x}}^{k,ex} + \mathbb{B}_k^{T} s_k\Big)dt.\label{MFeqAsObtained}
\end{multline}
%Since \eqref{EmpAveClosedMinor_allPO_limit} and \eqref{MeanFieldEq} must be identical, we obtain the Consistency Equations, 
In order to generate a mean-field game equilibrium \eqref{MFeqAsObtained} and the $k$th component of \eqref{MeanFieldEq} must correspond to the same dynamical  system generating the mean field. Consequently we obtain the Consistency Condition equations, determining the components of $\bar{A}$, $\bar{G}$, $\bar{H}$, $\bar{L}$, $\bar{J}$, and $\bar{m}$ in \eqref{MeanFieldEq}, given by the following compact set of equations
\begin{align}
%&\rho \Pi_0  =  \Pi_0 \mathbb{A}_0 + \mathbb{A}_0^T \Pi_0 - \Pi_0 \mathbb{B}_0 R_0^{-1} \mathbb{B}_0^T \Pi_0 + Q_0^\pi,\nonumber\\
%&\rho \Pi_k  = \Pi_k \mathbb{A}_k + \mathbb{A}_k^T \Pi_k - \Pi_k \mathbb{B}_k R^{-1} \mathbb{B}_k^T \Pi_k + Q^{\pi},\nonumber\\
&\bar{A}_k  = \big[A_k - B_k R^{-1} B_k^T \Pi_{k,11}\big] \bar{\mathbf{e}}_k - B_k R^{-1} B_k^T \Pi_{k,13}, \nonumber \allowdisplaybreaks\\
&\bar{G}_k  ~= G-B_k R^{-1} B_k^T \Pi_{k,12}, \nonumber \allowdisplaybreaks\\
& \bar{H}_k = -B_k R^{-1} B_k^T \Pi_{k,14}, \nonumber \allowdisplaybreaks\\
& \bar{L}_k = -B_k R^{-1} B_k^T \Pi_{k,15},    \nonumber \allowdisplaybreaks\\
& \bar{J}_k = B_k R^{-1}\mb{B}_k^T\Pi_k,  \nonumber \allowdisplaybreaks\\
%&\rho s_{0} = \tfrac{ds_0}{dt} + (\mathbb{A}_0 - \mathbb{B}_0 R_0^{-1} \mathbb{B}_0^T \Pi_0 )^T s_0 + \Pi_0 \mathbb{M}_0 - \bar{\eta}_0,\nonumber\\
%& \rho s_{k} = \tfrac{ds_k}{dt} + (\mathbb{A}_k - \mathbb{B}_k R^{-1} \mathbb{B}_k^T \Pi_k )^T s_k + \Pi_k \mathbb{M} - \bar{\eta},\nonumber\\
&\bar{m}_k ~= -B_k R^{-1} \mathbb{B}_k^{T} s_k,
\label{mfeqConsistency}
\end{align} 
 $1\leq k \leq K$, where $\Pi_k$ and $s_k$ satisfy \eqref{minorRiccati} and \eqref{minorOffset}, respectively. 
The set of equations \eqref{mfeqConsistency} together with \eqref{majorRiccati}-\eqref{majorOffset} and \eqref{minorRiccati}-\eqref{minorOffset} form a fixed point problem which must be solved by each individual agent $\mc{A}_i, 0 \leq i \leq N$, in order to compute the matrices in the mean field dynamics \eqref{MeanFieldEq}. 

Finally from \eqref{minorSys_noUhat} and \eqref{EmpAveClosedMinor_allPO_limit}-\eqref{estErrVecDynCompact} the Markovian dynamics of $\bar{x}^{k}$ (i.e. the mean field of subpopulation $k$, and the first component of $\bar{x}^{k,ex}$) are given by 
\begin{multline}
 \begin{bmatrix}
d\bar{x}^{k,ex} \\
d\bar{\tilde{x}}^{ex}
\end{bmatrix}= \begin{bmatrix}
\mb{A}_k-\mb{B}_kR^{-1}\mb{B}_k^T\Pi_k & \mathbb{J}+\mb{B}_kR^{-1}\mb{B}_k^T\Pi_k \tilde{\mathbf{e}}_k\\
0 & \tilde{\mb{A}}_k
\end{bmatrix} \begin{bmatrix}
\bar{x}^{k,ex} \\
\bar{\tilde{x}}^{ex}
\end{bmatrix} dt\\
+\begin{bmatrix}
\mb{M}-\mb{B}_k R^{-1} \mathbb{B}_k^{T} s_k\\
0
\end{bmatrix} dt +  \begin{bmatrix}
\mb{D} \\
\tilde{\mb{D}} 
\end{bmatrix}
\begin{bmatrix}
0_{r \times 1}\\
dw_0\\
0_{rK \times 1}\\
d\nu_0
\end{bmatrix}.
\end{multline} 

\begin{remark}
From \eqref{EstErrLimit}, in the infinite population limit, the average of the estimation errors of the minor agents of type $k, 1\leq k\leq K$, is driven by the major agent's Wiener process $w_0$ and the measurement noise $v_0$ (or equivalently the innovation process $\nu_0$). In other words, it is driven by the non-zero quadratic variation processes in the dynamics of the common processes $x_0^{ex}, \hat{x}_{0|\mc{F}^y_0}^{ex}$, with which the minor agents $\mc{A}_i, 1\leq i \leq N$, are coupled.

 From \eqref{estErrVecDynCompact}, $\bar{V}(t) = \mb{E}\big[\bar{\tilde{x}}^{ex}(t)\big({\bar{\tilde{x}}^{ex}}(t)\big)^T \big]$ satisfies 
%\begin{equation}
%\dot{\bar{V}} = \tilde{\mb{A}} \bar{V} + \bar{V} \tilde{\mb{A}}^T + \tilde{\mb{D}} \left[\begin{array}{cccc}
%0_{r \times r} & & &\\
%& I_{r \times r} & & \\
%& & 0_{rK \times rK}&\\
%&  & & I_{r\times r}
%\end{array}\right]\tilde{\mb{D}}^T,
%\end{equation}
\begin{equation}\label{Vbar}
\dot{\bar{V}} = \tilde{\mb{A}} \bar{V} + \bar{V} \tilde{\mb{A}}^T + \tilde{\mb{Q}} \tilde{\mb{Q}}^T,\end{equation}
where $\tilde{\mb{D}} = \tilde{\mathbf{1}}^T\mb{D}$ is used and
\begin{equation}
 \tilde{\mb{Q}} \tilde{\mb{Q}}^T = \tilde{\mathbf{1}}^T\mb{D}\begin{bmatrix}
0_{n \times n}  & 0 & 0 & 0\\
0 & I_n  & 0  & 0\\
0 & 0& 0_{nK \times nK}& 0\\
0 & 0 & 0 & \mb{L}_0 V_0 \mb{L}_0^T+R_{v_0}
\end{bmatrix}\mb{D}^T\tilde{\mathbf{1}}.
\end{equation}
To guarantee the convergence of the solution to the corresponding Lyapunov equation to a unique, symmetric and positive definite solution, we assume: 
\begin{assumption}\label{LyaCond} The pair $(\tilde{\mathbb{A}},  \tilde{\mb{Q}})$ is controllable.
 \hfill $\square$
\end{assumption}
\end{remark}

\begin{remark} 
For the case where the major agent has complete observations on its own state, and each minor agent has complete observations on their own state and the major agent's state, we have (i) $\mb{E}\{x_0|\mc{F}^y_0\} =x_0$, (ii) $\mb{E}\{\bar{x}|\mc{F}^y_0\}=\bar{x}$, (iii) $\bar{\tilde{x}}^{k,ex}(t)=0,\, t\geq 0, \, 1 \leq k \leq K$; where (ii) holds since the major agent can compute the real value of $\bar{x}$ by observing its own state. Hence the mean field equation \eqref{MeanFieldEq} reduces to that of completely observed major- minor LQG MFG systems (see \cite{Huang2010}).  \hfill $\square$
 \end{remark}
 \begin{remark}[Estimate of Average Estimation Error: Inf. Pop.]\label{xTildeEst}
The solution to \eqref{estErrVecDynCompact} is given by
\begin{equation}\label{EstErrLimitSol} 
\bar{\tilde{x}}^{ex}(t) = \Phi(t, 0) \bar{\tilde{x}}^{ex}(0)+\int_0^t \Phi(t, \tau)\tilde{\mb{D}}[0_{1\times r}, dw_0^T, 0_{1\times rK}, d\nu_0^T]^Td\tau, \end{equation}
where $\Phi(t, \tau) = \exp\big(\tilde{\mb{A}}(t-\tau)\big)$. The initial estimation error of the minor agent $\mc{A}_i$ is given by
\begin{align}
{\tilde{x}}^{k,ex}_i(0) = - \begin{bmatrix}
 \hat{x}_{i|\mathcal{F}^y_i}(0) -x_i(0)\\
 \hat{x}_{0|\mathcal{F}^y_i}(0) - x_0(0)\\
 \hat{\bar{x}}_{|\mathcal{F}^y_i}(0) - \bar{x}(0)\\
 (\hat{x}_{0|\mathcal{F}^y_0})_{|\mathcal{F}^y_i}(0) - \hat{x}_{0|\mathcal{F}^y_0}(0)\\
( \hat{\bar{x}}_{|\mathcal{F}^y_0})_{|\mathcal{F}^y_i}(0) - \hat{\bar{x}}_{|\mathcal{F}^y_0}(0)
 \end{bmatrix}  = \begin{bmatrix}
 x_i(0)\\
 x_0(0)\\
  0_{nK\times 1}\\
  0_{n \times 1}\\
0_{nK \times 1} \end{bmatrix},
 \end{align}
 since the partial observation information sets $\mc{F}^y_i, 0\leq i \leq N$, at time $t_0=0$ are null sets, the conditional expectations turn into total expectations which according to \textit{Assumption \ref{IntialStateAss}} their value is zero. Hence, the infinite-population limit of the average initial estimation error of the minor agents of type $k$ is given by 
 %\begin{equation}
 $(\bar{\tilde{x}}^{k,ex}(0))^T = [0_{1\times n}, x_0^T(0), 0_{1 \times nK}, 0_{1 \times n}, 0_{1 \times nK}],$
% \end{equation} 
 where $\textit{Assumption \ref{IntialStateAss}}$ is again used, and hence $\mb{E}[\bar{\tilde{x}}^{k,ex}(0)|\mc{F}^y_i]=0$. Then the conditional expectation of $\bar{\tilde{x}}^{ex}(t)$ with respect to $\mc{F}^y_i, 0\leq i \leq N$, i.e. $\hat{\bar{\tilde{x}}}^{ex}_{|\mc{F}^y_i}(t)$, is given by
 \begin{multline}\label{EstErrLimitExp} 
\hat{\bar{\tilde{x}}}^{ex}_{|\mc{F}^y_i}(t):=\mb{E}[\bar{\tilde{x}}^{ex}(t)| \mc{F}^y_i] = \Phi(t, 0) \mb{E}[\bar{\tilde{x}}^{ex}(0)|\mc{F}^y_i]\\+\mb{E}\Big [\int_0^t \Phi(t, \tau)\tilde{\mb{D}}\big[0_{1\times r}, dw_0^T, 0_{1\times rK}, d\nu_0^T\big]^Td\tau\Big|\mc{F}^y_i\Big ] = 0, \end{multline} 
  where the second term is zero due to \textit{Assumption \ref{independence_ass}}. \hfill$\square$
\end{remark} 
\begin{remark}
The setup under consideration yields, in particular, the time invariance of the coefficient $\bar{J}$ of $\bar{\tilde{x}}^{ex}$ in \eqref{MeanFieldEq}, where $\bar{\tilde{x}}^{ex}$ is generated by the dynamics \eqref{estErrVecDynCompact}. However, due to \textit{Observation \ref{xTildeEst}}, $\bar{\tilde{x}}^{ex}$ does not appear in the filter equations \eqref{majorKalman} and \eqref{minorKalman}. 
\hfill$\square$
\end{remark}
%where $\Pi_k$ is computed from \eqref{minorRiccati}. 
%\begin{align}
%&\rho \Pi_0  = \Pi_0 \mathbb{A}_0 + \mathbb{A}_0^T \Pi_0 - \Pi_0 \mathbb{B}_0 R_0^{-1} \mathbb{B}_0^T \Pi_0 + Q_0^\pi, \allowdisplaybreaks\\
%&\rho \Pi_k  = \Pi_k \mathbb{A}_k + \mathbb{A}_k^T \Pi_k - \Pi_k \mathbb{B}_k R^{-1} \mathbb{B}_k^T \Pi_k + Q^{\pi},~~ \forall k,  \allowdisplaybreaks\\  
%&\rho s_{0} ~= \frac{ds_0}{dt} + (\mathbb{A}_0 - \mathbb{B}_0 R_0^{-1} \mathbb{B}_0^T \Pi_0 )^T s_0 + \Pi_0 \mathbb{M}_0 - \bar{\eta}_0, \allowdisplaybreaks\\
%&\rho s_{k} = \frac{ds_k}{dt} + (\mathbb{A}_k - \mathbb{B}_k R^{-1} \mathbb{B}_k^T \Pi_k )^T s_k + \Pi_k \mathbb{M} - \bar{\eta},~~ \forall k, 
%\end{align}
%which we note $\tfrac{ds_0}{dt}=0$ and $\tfrac{ds_k}{dt}=0$ since $\mathbb{M}_0,\, \bar{\eta}_0,\, \mathbb{M},\, \bar{\eta}$ are constant. % in particular, the seventh and eighth equations above are integrated backwards in time from an  infinite horizon.\\
\iffalse
Next we define 
\begin{gather}
M_1 = \begin{bmatrix}
A_1 - B_1 R^{-1} B_1^T \Pi_{1,11} & \text{\Large0}  \\
~~~~~~~~~~~~~~~~~~~~~~~~\ddots &\\
\text{\Large0} & A_K-B_K R^{-1} B_{K}^{T} \Pi_{K,11}  
\end{bmatrix}, \nonumber\\
M_2 = \begin{bmatrix}
B_1R^{-1}B_1^T\Pi_{1,13}\\
\vdots\\
B_K R^{-1} B_K^T \Pi_{K,13} 
\end{bmatrix} ,\,\,\,\,\,\,  M_3 = 
\begin{bmatrix}
A_0 & 0 & 0 \\
\bar{G} & \bar{A} & 0 \\
\bar{G} & -M_2 & M_1
\end{bmatrix}, \nonumber \\
L_{0,H} = Q_0^{1/2} \begin{bmatrix}I, 0, -H_0^\pi \end{bmatrix}.
\end{gather}
\fi
 The final set of assumptions is as follows:
%\begin{ass} \label{ObservabilityAss}
 % The pair $(L_{0,H}, M_3)$ is observable.
%\end{ass}
\begin{assumption} \label{DetectabilityStabilizabilityAss}
The pair $(L_{a}, \mathbb{A}_0 - (\rho /2)I_{n+nK})$ is detectable, and for each $k, 1 \leq k \leq K$, the pair $(L_{b}, \mathbb{A}_k - (\rho /2)I_{3n+2nK})$ is detectable, where $L_a = Q_0^{1/2}[I_n, -H_0^\pi]$ and $L_b = Q^{1/2}[I_n, -H_1, -H_2^\pi, 0_{n \times (n+nK)}]$. The pair $(\mathbb{A}_0-(\rho/2)I_{n+nK}, \mathbb{B}_0)$ is stabilizable and $(\mathbb{A}_k-(\rho/2)I_{3n+2nK}, \mathbb{B}_k)$ is stabilizable for each $k, 1 \leq k \leq K$.
\end{assumption}
\begin{assumption} \label{MFEquationSolAss}
The parameters in \eqref{MajorAgentEq}-\eqref{MinorCostLrgPop} belong to a non-empty set which yields the existence and uniqueness of the solutions ($\Pi_0$, $s_0$, $\Pi_k$, $s_k$, $\bar{A}_k$, $\bar{G}_k$, $\bar{H}_k$, $\bar{L}_k$, $\bar{J}_k$, $\bar{m}_k$) to the resulting set of mean-field fixed-point equations consisting of (\ref{mfeqConsistency}), \eqref{majorRiccati}-\eqref{majorOffset}, and \eqref{minorRiccati}-\eqref{minorOffset}, for which %\begin{align}
%&\mb{A}_0 - \mb{B}_0 R_0^{-1}\mb{B}_0^\intercal \Pi_0 - \tfrac{\rho}{2}I,\\
%&\mb{A}_k - \mb{B}_k R_k^{-1}\mb{B}_k^\intercal \Pi_k - \tfrac{\rho}{2}I, \quad 1 \leq k \leq K,
%\end{align}
%are asymptotically stable, and 
\begin{equation}
\sup_{t\geq 0,1 \leq  k \leq K} e^{-\tfrac{\rho}{2}t}\left( |s_0(t) | + |s_k(t)| + |\bar{m}_k(t)| \right) < \infty.
\end{equation}  
 \end{assumption}
\begin{theorem}[$\epsilon$-Nash Equilibria for PO LQG MM MFG Systems] \label{Thm: POLQGMM-MFG}
Subject to \textit{Assumption \ref{IntialStateAss}- Assumption \ref{MFEquationSolAss}}, the KF-MFG state estimation scheme \eqref{majorKalman}-\eqref{RiccatiEqMajor} and \eqref{RiccatiEqMinor}-\eqref{minorKalman}  together with the Consistency Condition equations \eqref{mfeqConsistency}, \eqref{majorRiccati}-\eqref{majorOffset}, \eqref{minorRiccati}-\eqref{minorOffset} generate an infinite family of stochastic control laws  $\hat{\mathcal{U}}_{MF}^{\infty}$, with finite sub-families $\hat{\mathcal{U}}_{MF}^{N} ~:=~ \{u_i^{\circ};\,  0 \leq i < N \}$, $1 \leq N < \infty $, given by  \eqref{UOptMajEst} and \eqref{minorCntrl}, such that 
%\begin{gather}
%\hat{u}_0^\circ = -R_0^{-1} \mathbb{B}_0^T \left[\Pi_0(\hat{x}_{0|\mathcal{F}_0^y}^T,\hat{\bar{x}}_{|\mathcal{F}_0^y}^T)^T+s_0 \right], \label{majorEstBR}\\
%\hat{u}_i^\circ = -R^{-1} \mathbb{B}_k^T \left [\Pi_k(\hat{x}_{i|\mathcal{F}_i^y}^T,\hat{x}_{0|\mathcal{F}_i^y}^T,\hat{\bar{x}}_{|\mathcal{F}_i^y}^T, (\hat{x}_{0|\mathcal{F}^y_0})_{|\mathcal{F}^y_i}^T,
% (\hat{\bar{x}}_{|\mathcal{F}^y_0})_{|\mathcal{F}^y_i}^T)^T+s_k \right], \label{minorEstBR} 
%\end{gather}
\begin{enumerate}
\item[(i)] $ {\hat{\mathcal{U}}}_{MF}^{\infty}$ yields a unique Nash equilibrium within the set of linear controls  $\mathcal{U}_{y}^{\infty,L}$ such that 
 \begin{equation}
 J_i^{\infty}(u_{i}^{\circ}, u_{-i}^{\circ}) = \inf_{u_i \in \mathcal{U}^{\infty,L}_{y} }J_i^{\infty} (u_i, u_{-i}^{\circ}) \nonumber;
  \end{equation}
\item[(ii)] All agent systems $0 \leq i \leq N$, are $e^{-\frac{\rho}{2}t}$ discounted second order stable in the sense that  for $C$ independent of $N$
\begin{equation*}
\hspace{0mm}\sup_{t\geq 0,\, 0\leq i \leq N} \hspace{-3mm}e^{-\frac{\rho}{2}t} \mathbb{E} \Big ({\Vert \hat{x}_{i|\mathcal{F}^y_i} \Vert}^2+{\Vert \hat{x}_{0|\mathcal{F}^y_i} \Vert}^2 + {\Vert \hat{\bar{x}}_{|\mathcal{F}^y_i}\Vert}^2 +\Vert(\hat{x}_{0|\mathcal{F}^y_0})_{|\mathcal{F}^y_i}\Vert^2+ \Vert(\hat{\bar{x}}_{|\mathcal{F}^y_0})_{|\mathcal{F}^y_i}\Vert^2\Big) < C;
\end{equation*}
 \item[(iii)]  $\{ \hat{\mathcal{U}}_{MF}^N;1\leq N < \infty\}$ yields a unique $\epsilon$-Nash equilibrium within the class of linear control laws $\mathcal{U}_{y}^{N,L}$ for all $\epsilon$, i.e. for all $\epsilon>0$, there exists $N(\epsilon)$ such that for all $N \geq N(\epsilon)$;
\begin{equation*}
J_i^{s,N}(\hat u_i^\circ, \hat u_{-i}^\circ)-\epsilon \leq\inf_{u_i \in\mathcal{U}_{y}^{N,L} } J_i^{s,N}(u_i, \hat u_{-i}^\circ) \leq  J_i^{s,N}(\hat u_i^\circ, \hat u_{-i}^\circ),
\end{equation*}
 where the major agent's and the generic minor agent's performance function $J_i^{s,N}( u_i^\circ, u_{-i}^\circ)$, $0 \leq i \leq N$, is given by
\begin{equation*}
J_i^N(u_i, u_{-i}) + \hat{E}_N,
\end{equation*}
where $J_i^N(u_i, u_{-i})$ is as in the completely observed case, $\hat{E}_N > 0$, and when $u_i = \hat{u}_i^{\circ}$ the following limits hold: 
\begin{itemize}
\item $\lim_{N \rightarrow \infty} J_i^N(\hat u^{\circ}_{i}, \hat u^{\circ}_{-i}) = J_i^{\infty}(\hat{u}_i^{\circ}, \hat{u}^{\circ}_{-i})$,
\item $\lim_{N \rightarrow \infty} \hat{E}_N = \int_{0}^{\infty} e^{- \rho t} \mbox{tr}[Q^{\pi}V]dt$,\\
where $V(t)$ is the solution to (\ref{RiccatiEqMajor}) for the major agent and the solution to (\ref{RiccatiEqMinor}) for a generic minor agent.
\end{itemize}
\end{enumerate}
\end{theorem}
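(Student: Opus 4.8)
The plan is to establish the three claims by combining the Separation Principle for LQG systems with a fixed-point (consistency) argument for the mean field, and then transferring the infinite-population optimality to the finite population through a mean-field approximation estimate; the backbone is that, conditioned on the common processes driving the stochastic mean field (the major agent's noise $w_0$ and innovation $\nu_0$), each agent's infinite-population problem is a standard LQG tracking problem. For part (i) I would first invoke the Separation Principle: since the cost functional \eqref{MinorAgtCostInfPop} (and the major agent's counterpart) is quadratic and the extended dynamics \eqref{minorSys_noUhat} are linear with Gaussian noise, the optimal control is certainty-equivalent, namely the full-information optimal feedback evaluated at the Kalman-filter estimates generated by \eqref{majorKalman} and \eqref{minorKalman}, and the infinite-horizon discounted analogue of \textit{Theorem \ref{thm:StochIndefiniteLQ}} delivers the feedback gains, with stabilizing Riccati solutions $\Pi_0, \Pi_k$ guaranteed by \textit{Assumption \ref{DetectabilityStabilizabilityAss}}, yielding \eqref{UOptMajEst} and \eqref{minorCntrl}. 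Mean-field consistency is then imposed by requiring that the mean field each agent tracks coincide with the one generated by the closed-loop population, which is the content of \eqref{mfeqConsistency}; under \textit{Assumption \ref{MFEquationSolAss}} this fixed-point system admits a unique solution, and since each agent strictly minimizes a convex cost ($R_0, R > 0$) against the common consistent mean field, the resulting profile is the unique Nash equilibrium in $\mc{U}_y^{\infty,L}$.

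For part (ii) I would bound the $e^{-(\rho/2)t}$-weighted second moments of the filter states directly. Each estimate process evolves under a closed-loop drift governed by $\mathbb{A}_0 - \mathbb{B}_0 R_0^{-1}\mathbb{B}_0^T\Pi_0$ for the major agent and $\mathbb{A}_k - \mathbb{B}_k R^{-1}\mathbb{B}_k^T\Pi_k$ for a minor agent, together with a filtering correction whose gain is bounded because the filtering-error covariances $V_0, V_i$ (with the common-error forcing $\bar{V}$ solving \eqref{Vbar} under \textit{Assumption \ref{LyaCond}}) are bounded. By \textit{Assumption \ref{DetectabilityStabilizabilityAss}} these drift matrices shifted by $-(\rho/2)I$ are Hurwitz, so in the variable $e^{-(\rho/2)t}(\cdot)$ the second moments obey a Lyapunov inequality with an $N$-independent forcing term, and a Gr\"onwall argument then produces the uniform bound $C$, giving (ii).

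Part (iii) is the crux, and I would proceed in four steps. (a) Show that, when the infinite-population laws are applied to the finite system, the empirical average $x^{(N)}$ converges to $\bar{x}$ in $e^{-(\rho/2)t}$-discounted quadratic mean at rate $O(N^{-1/2})$, using the state decomposition \eqref{EstDecompCompact}, the q.m.\ convergence of the estimation-error averages from \textit{Proposition 3.1} of \cite{KizilkaleTAC2016}, and the fact that $w_0, \nu_0$ are shared by all agents while the idiosyncratic contributions vanish by the law of large numbers. (b) Deduce $\lim_{N\to\infty} J_i^N(\hat{u}_i^\circ, \hat{u}_{-i}^\circ) = J_i^\infty(\hat{u}_i^\circ, \hat{u}_{-i}^\circ)$ by inserting this approximation into the quadratic costs, the uniform moment bounds from (ii) supplying the uniform integrability. (c) Identify $\hat{E}_N$: decomposing the tracking error into its $\mc{F}_i^y$-measurable estimate and the orthogonal estimation error splits the quadratic cost into a part depending only on the estimates (optimized by the certainty-equivalent control and equal in the limit to the completely observed cost $J_i^N$) plus a control-independent filtering penalty equal to the $Q^\pi$-weighted trace of the error covariance $V$, whose discounted integral is $\int_0^\infty e^{-\rho t}\operatorname{tr}[Q^\pi V]\,dt$. (d) Establish the equilibrium inequalities by a unilateral-deviation argument: a minor agent's deviation within $\mc{U}_y^{N,L}$ perturbs the mean field by only $O(1/N)$, so against the essentially frozen mean field the infinite-population optimality from (i) forbids any cost reduction exceeding some $\epsilon(N)\to 0$, while the major agent is already playing its best response to the consistent mean field.

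The main obstacle I anticipate is making steps (a) and (d) rigorous in the presence of the recursive estimate-of-estimate states $(\hat{x}_{0|\mc{F}^y_0})_{|\mc{F}^y_i}$ and $(\hat{\bar{x}}_{|\mc{F}^y_0})_{|\mc{F}^y_i}$. These introduce the coupling $\bar{J}\,\bar{\tilde{x}}^{ex}$ into the mean field \eqref{MeanFieldEq} that is absent in the completely observed theory, so the convergence analysis must track the extended error process \eqref{estErrVecDynCompact} rather than the states alone, and one must verify that this nested layer degrades neither the $O(N^{-1/2})$ rate nor the uniform stability of part (ii). The structural fact that rescues the argument, to be used explicitly, is the asymmetry noted after \eqref{minorKalman}: the major agent does not estimate the minor agents, so the estimation hierarchy terminates after a single level and no infinite regress arises, which is exactly what keeps the extended systems finite-dimensional and the preceding Lyapunov and convergence estimates valid.
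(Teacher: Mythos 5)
Your proposal follows essentially the same route as the paper's proof: decompose each state into its conditional estimate plus the orthogonal estimation error, separate the performance functionals into a control-dependent part (driving the certainty-equivalent tracking controls via the Separation Principle and the discounted analogue of \textit{Theorem \ref{thm:StochIndefiniteLQ}}) and a control-independent filtering penalty that yields $\hat{E}_N$, close the loop with the consistency equations \eqref{mfeqConsistency}, and then transfer to the finite population by the standard MM LQG MFG approximation analysis. Your steps (a)--(d) for part (iii) and the Lyapunov argument for part (ii) simply spell out what the paper delegates to the ``standard approximation analysis parallel to that of completely observed major-minor LQG MFG systems,'' so there is no substantive divergence.
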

\begin{proof}
% obsolete
%We now apply the standard Separation Theorem method $x_i = \hat x_i + (x_i - \hat x_i) $  and the LQG Major-Minor agent LQG MF method (???) applied to the controlled estimated state equations.
%
%The major agent and individual minor agent state estimation recursive equations schemes are given by the MM KF-MF Equations  above (for size $N$ finite populations and infinite populations).
%
%One next applies the Separation Theorem strategy for reducing a partially observed SOC problem to a completely observed SOC problem for the controlled state estimate processes beginning with the re-expression of the perfomance functions in terms of the state estimation proceses.
%   
%
%The control law dependent summand of the {individual cost} for the {major} agent $A_0$:
%\begin{multline*} J_0^N(u_0, u_{-0})  = \E\int_0^\infty e^{-\rho t}\Big\{\big \lVert  x_0 - {H_0 \hat x^{N}_{|\F_0}-\eta_0}\big\rVert_{Q_0}^2 \\+ \lVert u_0 \rVert_{R_0}^2 \Big\}dt,
%\end{multline*}
%where $\hat x^N_{|\F_0}=(1/N)\sum _{i=1}^N \hat x_{i|\F_0}$.
%
%The control law dependent summand of the {individual cost} for a {minor} agent $A_i,\,\rn$:
%\begin{multline*}
%J_i^N(u_i, u_{-i}) = \E\int_0^\infty e^{-\rho t} \Big\{ \big \lVert \hat x_{i|\F_i^y} - H_1 \hat x_{0|\F_i^y} \\ -  H_2 \hat x^{N}_{|\F_i^y}-\eta \big \rVert_Q^2 + \lVert u_i \rVert_R^2 \Big\}dt.
%\end{multline*}
Generalizing the standard methodology in \cite{CainesBook1988} and \cite{DavisBook1977}, we first decompose the state processes into their estimates and their  estimation errors orthogonal to the corresponding estimates. Substituting the decomposed states into the performance functions and applying the smoothing property of conditional expectations with respect to the increasing filtration families $\mathcal{F}^y_i$ and $\mathcal{F}^y_0$ to the major and minor cost functionals respectively, we obtain the separated performance functions. This technique is applied to both finite and infinite population cases which yields the best response controls $\{\hat{u}_{i}^\circ,~ 0 \leq i \leq N\}$ as optimal tracking controls for the major and minor agents in the infinite population case  (see \cite{KizilkaleTAC2016} for the case where only the minor agent has partial observations on the major agent's state).  Specifically we form the following decompositions where the superscript 's' on the resulting performance functions indicates the separation into control dependent and control independent summands. 
\begin{enumerate}
\item Major Agent's State Decomposition\\
\textit{Finite Population:}
\begin{equation*}
 \left [ \begin{array}{c} x_0 \\ x^{(N)} \end{array}  \right ] = \left [ \begin{array}{c} \hat{x}_{0|\mathcal{F}^y_0}  \\  \hat{x}^{(N)}_{|\mathcal{F}^{y}_{0}} \end{array}\right ] + \left [ \begin{array}{c} x_0-\hat{x}_{0|\mathcal{F}^y_0}  \\ x^{(N)} - \hat{x}^{(N)}_{|\mathcal{F}^{y}_{0}} \end{array}\right] .
 \end{equation*}
\textit{Infinite Population:}
\[ \left [ \begin{array}{c} x_0 \\ \bar x \end{array}  \right ] = \left [ \begin{array}{c} \hat{x}_{0|\mathcal{F}^y_0}  \\ \hat{\bar{ x}}_{|\mathcal{F}^{y}_{0}} \end{array}\right ] + \left [ \begin{array}{c} x_0-\hat{x}_{0|\mathcal{F}^y_0}  \\ \bar x - \hat{\bar{x}}_{|\mathcal{F}^{y}_{0}} \end{array}\right] . \] 

\item Major Agent's Cost Functional Separation\\
\textit{Finite Population:}
\begin{multline}\label{eq:sepPF1}
\hspace{-5mm} J_0^{s, N}(u_0, u_{-0}) = \mathbb{E} \bigg [ \int_0^\infty e^{-\rho t}  \Big \{ \big \lVert  \hat{x}_{0|\mathcal{F}^y_0}  - H_0 \hat{x}^{(N)}_{|\mathcal{F}^y_0}  -\eta_0 \big\rVert_{Q_0}^2 
+ \lVert u_0 \rVert_{R_0}^2 \Big\}dt \bigg ] \allowdisplaybreaks\\ + \mathbb{E} \bigg [ \int_0^\infty e^{-\rho t} \big \lVert (x_0-\hat{x}_{0|\mathcal{F}^y_0})  - 
H_0 ( x^{(N)} - \hat {x}^{(N)}_{|\mathcal{F}^y_0}) \big \rVert_{Q_0}^2 dt \bigg].
 \end{multline}
 \textit{Infinite Population:}
\begin{multline}\label{eq:sepPF2}
\hspace{-5mm} J_0^{s, \infty} = \mathbb{E} \bigg [ \int_0^\infty e^{-\rho t} \Big \{ \big \lVert  \hat{x}_{0|\mathcal{F}^y_0}  - H_0^\pi \hat{\bar x}_{|\mathcal{F}^y_0}  - \eta_0 \big\rVert_{Q_0}^2
  + \lVert u_0 \rVert_{R_0}^2 \Big\}dt \bigg ] \allowdisplaybreaks\\+ \mathbb{E} \bigg [ \int_0^\infty e^{-\rho t} \big \lVert (x_0-\hat{x}_{0|\mathcal{F}^y_0}  )- 
H_0^\pi (\bar x - \hat {\bar x}_{|\mathcal{F}^y_0}) \big \rVert_{Q_0}^2 dt \bigg].
 \end{multline}

\item Minor Agent's State Decomposition\\
\textit{Finite Population:}
\[ \left [ \begin{array}{c}x_i\\x_0\\ x^{(N)} \end{array} \right ] = \left [ \begin{array}{c} \hat x_{i|\mathcal{F}_i^y} \\ \hat x_{0|\mathcal{F}_i^y}\\ {\hat{x}^{(N)}}_{|\mathcal{F}_i^y} \end{array}\right ] + \left [ \begin{array}{c} x_i - \hat x_{i|\mathcal{F}_i^y} \\ x_0 - \hat x_{0|\mathcal{F}_i^y}\\ x^{(N)} -\hat{x}^{(N)}_{|\mathcal{F}_i^y}\end{array}\right] . \] 
\textit{Infinite Population:}
\[ \left [ \begin{array}{c}x_i\\x_0\\ \bar x \end{array} \right ] = \left [ \begin{array}{c} \hat x_{i|\mathcal{F}_i^y} \\ \hat x_{0|\mathcal{F}_i^y}\\ {\hat{\bar x}}_{|\mathcal{F}_i^y} \end{array}\right ] + \left [ \begin{array}{c} x_i - \hat x_{i|\mathcal{F}_i^y} \\ x_0 - \hat x_{0|\mathcal{F}_i^y}\\ \bar x -\hat{ {\bar x}}_{|\mathcal{F}_i^y}\end{array}\right] . \] 
\item Minor Agent's Cost Functional Separation\\
\textit{Finite Population:}
\begin{multline}\label{eq:sepPF3}
J_i^{s, N}(u_i, u_{-i}) = \mathbb{E} \bigg [ \int_0^\infty e^{-\rho t} \Big\{ \big \lVert \hat x_{i|\mathcal{F}_i^y} - H_1 \hat x_{0|\mathcal{F}_i^y} \allowdisplaybreaks\\ 
-  H_2 \hat{x}^{(N)}_{|\mathcal{F}_i^y}-\eta \big \rVert_Q^2 + \lVert u_i \rVert_R^2 \Big\}dt \bigg] 
+ \mathbb{E} \bigg [ \int_0^\infty e^{-\rho t} \big \lVert (x_i-\hat x_{i|\mathcal{F}_i^y})  \allowdisplaybreaks\\ - H_1 (x_0-\hat x_{0|\mathcal{F}_i^y}) - H_2(x^{(N)} - \hat{x}^{(N)}_{|\mathcal{F}_i^y}) \big \rVert^2_Q  dt\bigg] .
\end{multline}
\newpage
\textit{Infinite Population:}
\begin{multline}\label{eq:sepPF4}
J_i^{s, \infty} = \mathbb{E} \bigg [ \int_0^\infty e^{-\rho t} \Big\{ \big \lVert \hat x_{i|\mathcal{F}_i^y} - H_1 \hat x_{0|\mathcal{F}_i^y} \allowdisplaybreaks\\ 
-  H_2^\pi \hat{\bar x}_{|\mathcal{F}_i^y}-\eta \big \rVert_Q^2 + \lVert u_i \rVert_R^2 \Big\}dt \bigg] 
+ \mathbb{E} \bigg [ \int_0^\infty e^{-\rho t} \big \lVert (x_i-\hat x_{i|\mathcal{F}_i^y}) \allowdisplaybreaks\\ - H_1 (x_0-\hat x_{0|\mathcal{F}_i^y}) - H_2^\pi(\bar x - \hat{\bar x}_{|\mathcal{F}_i^y}) \big \rVert^2_Q  dt\bigg] .
\end{multline}
\end{enumerate}
As can be seen, the first integral expressions in \eqref{eq:sepPF1}, \eqref{eq:sepPF2}, \eqref{eq:sepPF3} and \eqref{eq:sepPF4}  depend on the estimated states generated by the estimation schemes \eqref{majorKalman} and \eqref{minorKalman} for the major agent and minor agents respectively, and the second integral expressions depend only upon the respective estimation errors and on the solutions to the associated Riccati equations. The latter expressions are independent of the control actions and generate the additional cost $\hat{E}_N$ in the finite population case incurred by the errors in the estimation process. 
%Next, the resulting infinite population tracking problems are solved %for the major and minor agents in their separated forms with the %controlled state in 44 replacing that in 51. This is achieved by use %of the  MM LQG MFG  equations   blah blah.

Next, the resulting infinite population tracking problems are solved for the major and minor agents in their separated forms. The control dependent summands in \eqref{eq:sepPF2} have exactly the same structure in terms of the functional dependence on the estimated states as the infinite population cost functionals in the complete observation case have on the states. Moreover, the control dependent summands in \eqref{eq:sepPF4} have exactly the same structure in terms of the functional dependence on the estimated states as the infinite population cost functional for the system \eqref{minorSys_noUhat} with complete observations on its own state, the major agent's state, and the major agent's estimates of its own state and the mean field. Hence, by the Separation Principle the infinite population Nash Certainly Equivalence equilibrium controls are given by $\{\hat{u}_{i}^\circ,~ 0 \leq i \leq N\}$ in the theorem statement. Finally the infinite population control actions are applied to the finite population systems and the fact that these yield (i) $e^{-\frac{\rho}{2}t}$ second order system stability, and (ii) $\epsilon$-Nash equilibrium property, is established by the standard approximation analysis parallel to  that of completely observed major-minor LQG MFG systems (see \cite{HuangTAC2007}, \cite{Huang2010}).      
  \end{proof}

\begin{remark} We note that $(\hat{x}_{0|\mathcal{F}^y_0})_{|\mathcal{F}^y_i}$ and $(\hat{\bar{x}}_{|\mathcal{F}^y_0})_{|\mathcal{F}^y_i}$ do not appear in the minor agent's state decomposition and in its separated performance function but that they are used in the extended estimated state recursion \eqref{minorKalman} and hence appear in the control action for a minor agent in \eqref{minorCntrl}.
\end{remark}
\begin{remark} The non-uniqueness of Nash equilibria which may occur in classical LQG stochastic dynamic games with specified information sets \cite{Basar1976,Basar1977} does not occur in this analysis. This holds since, for the specified maximal individual information sets, and subject to the hypotheses of \textit{\Cref{Thm: POLQGMM-MFG}} giving unique solutions to the MFG Consistency equations (as functions of the system parameters), a unique linear best response function is obtained for each  agent with respect to its stochastic control problem arising from its performance function in the infinite population limit. We note that any set of controls generating a Nash equilibrium will yield the same consistency equations whose solution depends only on the system parameters.
\end{remark}

\section{Simulations} \label{sec:simulation}
Consider a system of 100 minor agents and a single major agent. The system matrices $\{A_k,~ B_k,~ 1 \leq k \leq 100\}$ for the minor agents are uniformly defined as
\begin{equation*}
 {A} := \left[ \begin{array}{cc}
       -0.05 & -2 \\
        1 & 0  \end{array} \right],~~  B := \left[ \begin{array}{cc}
        1 \\
        0 \end{array} \right], 
\end{equation*}
and for the major agent we have
\begin{equation*}
 A_0 := \left[ \begin{array}{cc}
       -1 & -1 \\
       1 & 0  \end{array} \right], \quad B_0 := \left[ \begin{array}{cc}
       1 \\
        0 \end{array} \right].
\end{equation*}
The parameters used in the simulation are: $ t_{final} = 25 \,\text{sec},~\Delta t = 0.01\, \text{sec},~\sigma_{w_0} = \sigma_{w_i} = 0.009,~ \sigma_{v_0} = \sigma_{v_i} = 0.0003,~ \rho = 0.9,~ \eta_0 = \eta = [0.25,0.25]^T,~ Q_0 = Q = I_{2\times 2},\, R_0 = R = 1,~ H_0 = H_1 = H_2 = 0.6\times I_{2\times 2},~ G = 0_{2\times 2}$. The true and estimated state trajectories, and the estimation errors for a single realization can be displayed for the entire population of 101 agents together, but in figures \ref{fig:MajorTrajAndEst}-\ref{fig:MinorEstError} only 10 minor agents are shown for the sake of clarity.% Moreover, the steady state values for the estimation error covariance matrices of the major agent, $V_0^{\infty}$, and minor agents, $V^{\infty}$, are given by

\begin{figure}
\centering
\includegraphics[width=0.75\linewidth]{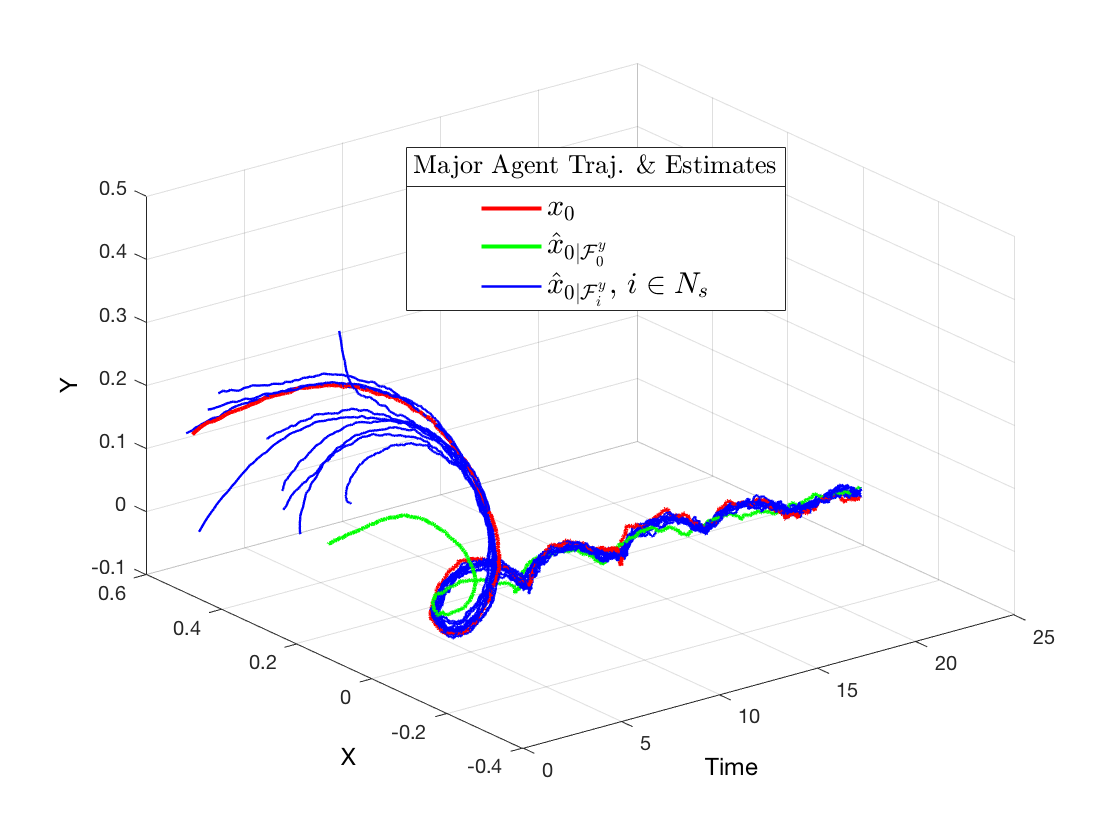}
\caption{The Major agent's true and estimated trajectories.}
\label{fig:MajorTrajAndEst}
\end{figure}

\begin{figure}
\centering
\includegraphics[width=0.75\linewidth]{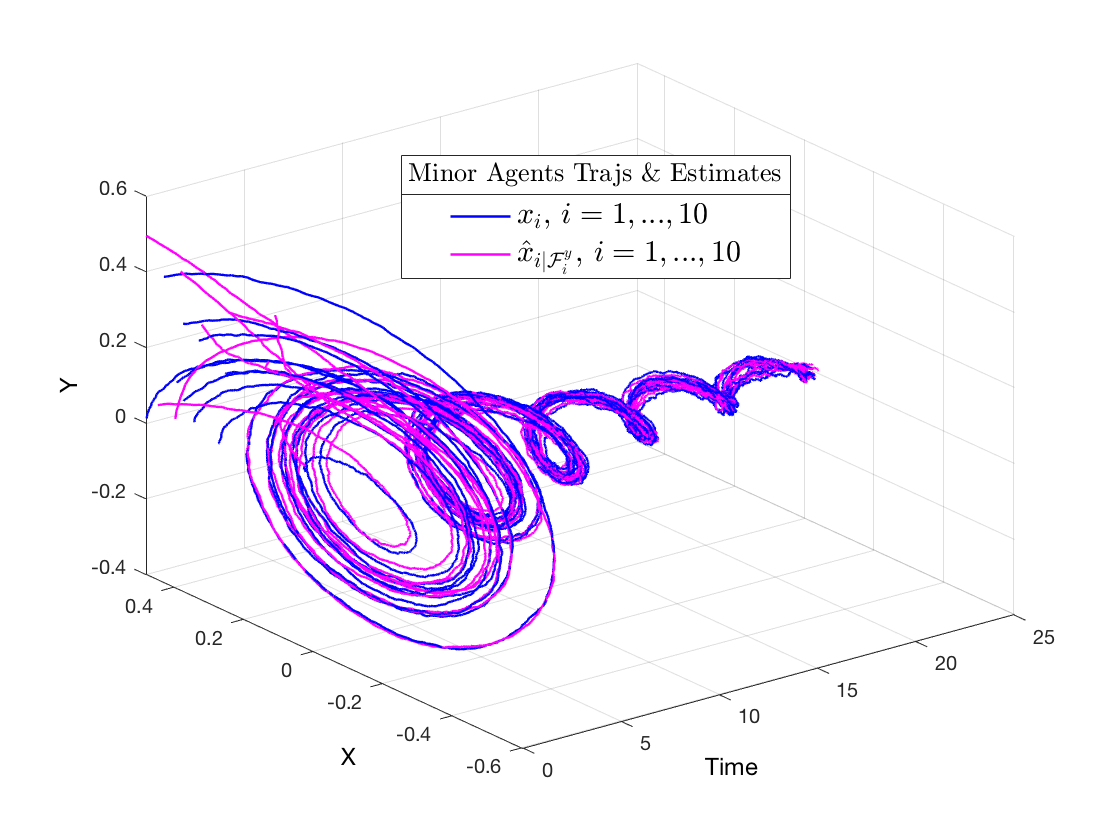}
\caption{10 Minor agents' true and estimated trajectories.} \label{fig:AllPO}
\end{figure}

\begin{figure}
\centering
\includegraphics[width=0.75\linewidth]{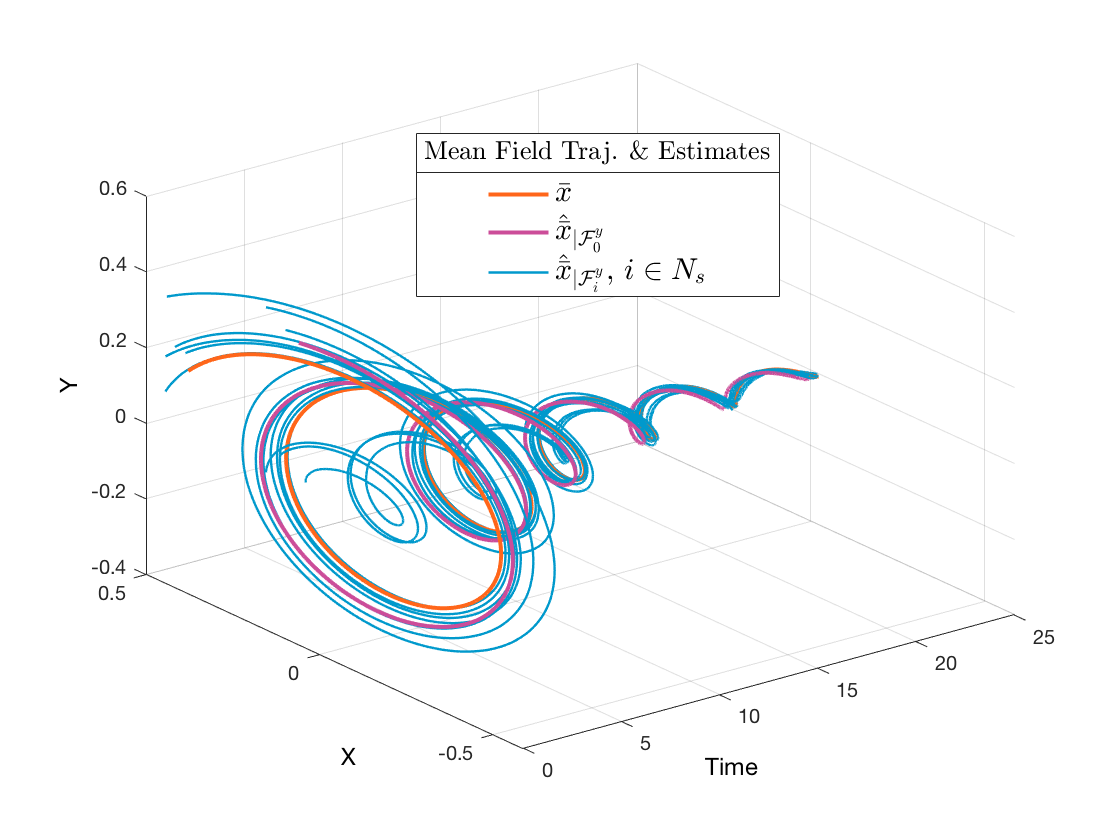}
\caption{The mean field true and estimated trajectories.} \label{fig:MFTrajAndEst}
\end{figure}

\begin{figure}
\centering
\includegraphics[width=0.75\linewidth]{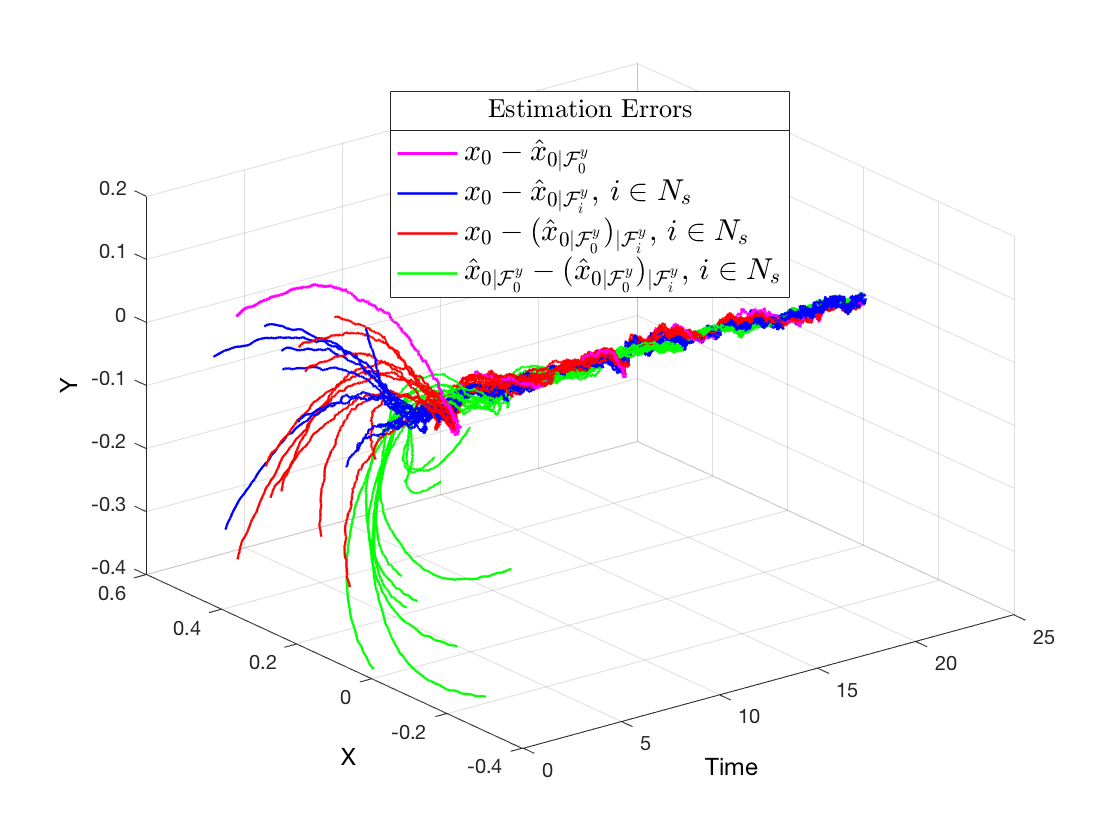}
\caption{The estimation errors of the major agent's trajectory.} \label{fig:MajorEstError}
\end{figure}

\begin{figure}
\centering
\includegraphics[width=0.75\linewidth]{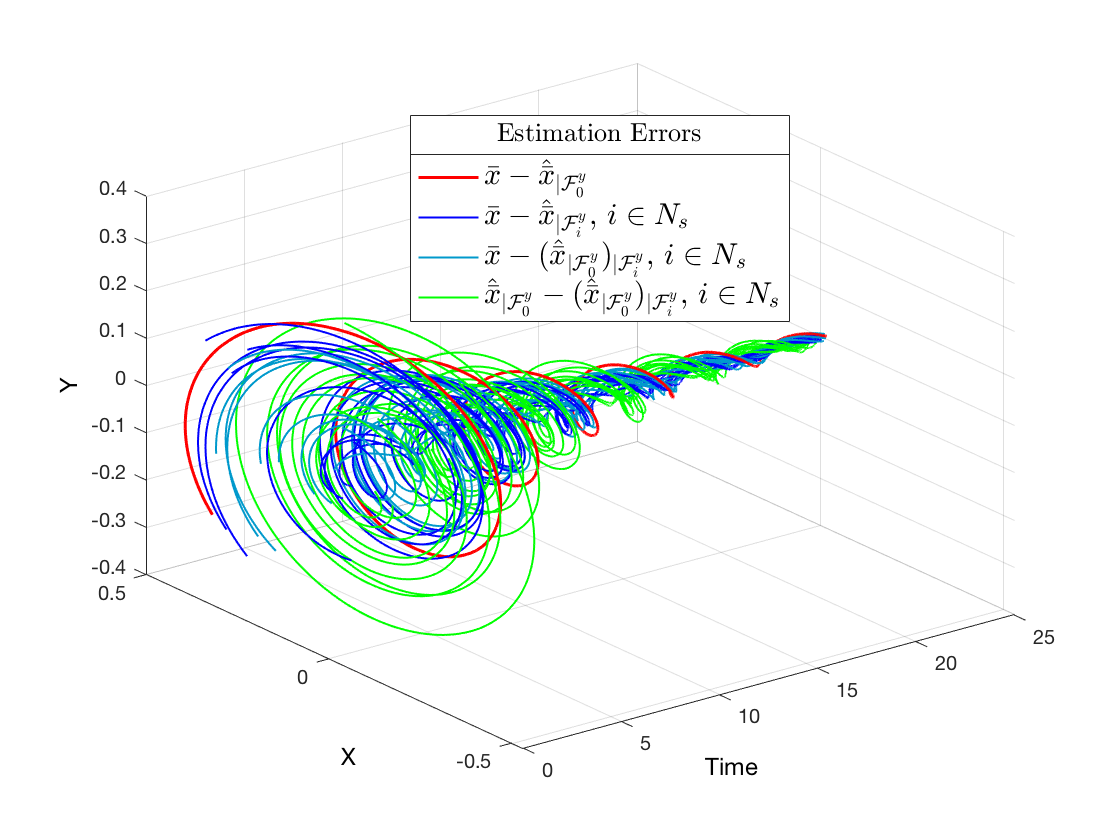}
\caption{The estimation errors of the mean field trajectory.} \label{fig:MFEstError}
\end{figure}

\begin{figure}
\centering
\includegraphics[width=0.75\linewidth]{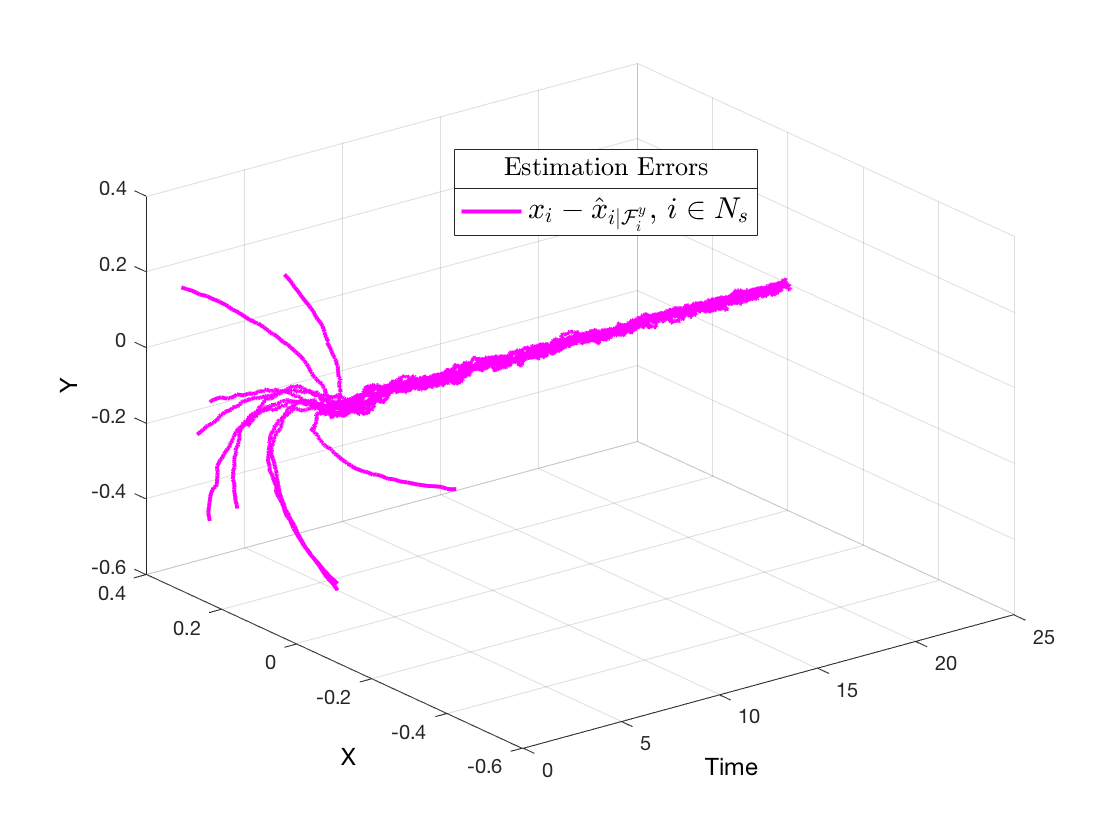}
\caption{The estimation errors of 10 minor agents' trajectories.} \label{fig:MinorEstError}
\end{figure}

\section{Conclusions}\label{sec:conclusions}
In this paper, PO MM LQG MFG problems with general information patterns are studied where (i) the major agent has partial observations on its own state, and (ii) each minor agent has partial observations on its own state and the major agent's state. For a general case LQG MFG systems, the existence of $\epsilon$-Nash equilibria together with the individual agents' control laws generating them are established via the Separation Principle. The assumption of partial observations for all agents leads to a new situation involving the recursive estimation by each minor agent of the major agent's estimate of its own state. To the best of our knowledge, the dynamic game theoretic equilibrium which is established in this paper constitutes a rare case wherein agents explicitly generate estimates of another agent's beliefs. Moreover,  this does not give rise to an infinite regress due to the information asymmetry of the major and minor agents.  
%\vspace{-4mm}

\appendix
\section{Time-Invariance of Finite-Population State Feedback Controls}\label{appendix}

If we write down the extended linear dynamics and quadratic cost functionals for the agents in the finite-population case, the system matrices and the control matrix coefficients in the dynamics (or filter equations) are time invariant. Moreover, the weight matrices in the cost functionals are time-invariant, and hence we have time-invariant infinite-horizon cost functionals. The optimal state linear feedback controls for classical time-invariant infinite-horizon linear quadratic problems would have time-invariant coefficients [33]. Therefore when we want to obtain an ansatz for the mean field equation, we consider the most general time-invariant state feedback controls for each agent. To make this more clear, we write down the finite-population extended systems for minor agents for both the complete and partial observations case. For the purpose of illustration we assume that all minor agents are of the same type.\\

%\begin{itemize}
\subsection{Complete Observations Case} \label{sec:CO}
The extended finite-population system for the major agent and a generic minor agent is given by stacking dynamics \eqref{MajorAgentEq} and \eqref{MinorAgentEq} as follows.
\begin{multline}
\begin{bmatrix}
dx_0\\
dx_1\\
\vdots\\
dx_i\\
\vdots\\
dx_N
\end{bmatrix} = \begin{bmatrix}
A_0 & 0 & 0 & \dots & 0 & \dots & 0\\
G & A & 0 & \dots & 0 & \dots & 0\\
\vdots& & &\vdots & &  &\vdots \\
G & 0 & 0 & \dots & A & \dots & 0\\
\vdots& & &\vdots & &  &\vdots \\
G & 0 & 0 & \dots & 0 & \dots & A
\end{bmatrix} \begin{bmatrix}
x_0\\
x_1\\
\vdots\\
x_i\\
\vdots\\
x_N
\end{bmatrix} dt + \begin{bmatrix}
B_0 & 0 & 0 & \dots & 0 & \dots & 0\\
0 & B & 0 & \dots & 0 & \dots & 0\\
\vdots& & &\vdots & &  &\vdots \\
0 & 0 & 0 & \dots & B & \dots & 0\\
\vdots& & &\vdots & &  &\vdots \\
0 & 0 & 0 & \dots & 0 & \dots & B
\end{bmatrix} \\ \times \begin{bmatrix}
u_0\\
u_1\\
\vdots\\
u_i\\
\vdots\\
u_N
\end{bmatrix} dt + \begin{bmatrix}
D_0 & 0 & 0 & \dots & 0 & \dots & 0\\
0 & D & 0 & \dots & 0 & \dots & 0\\
\vdots& & &\vdots & &  &\vdots \\
0 & 0 & 0 & \dots & D & \dots & 0\\
\vdots& & &\vdots & &  &\vdots \\
0 & 0 & 0 & \dots & 0 & \dots & D
\end{bmatrix} \begin{bmatrix}
dw_0\\
dw_1\\
\vdots\\
dw_i\\
\vdots\\
dw_N
\end{bmatrix}. \label{app:finDynCompObs}
\end{multline}
 Equivalently the above system can be written as 
\begin{equation}
dx^{ex} = (\tf{A} x^{ex} + \sum_{i=0}^{N}\tf{B}_iu_i)dt + \tf{D} d\tf{W},\label{app:dynMajorCompleteObs} 
\end{equation}
where $\{\tf{B}_i,\, 0\leq i\leq N\}$ correspond to the $N+1$ columns of the control matrix coefficient in \eqref{app:finDynCompObs}. 
The cost functional for each agent in terms of the extended state $x^{ex}$ can be written as
\begin{equation} 
J_{i}^{N,ex}(u_{i}, u_{-i}) = \mathbb{E} \int_{0}^{\infty}  e^{-\rho t} \Big\{ \Vert {x}^{ex}\Vert _{\tf{Q}_i}^2 -2(x^{ex})^T \gamma_i+ \Vert u_i \Vert_{R}^2 \Big\}dt, \quad 0\leq i \leq N. \label{app:costMajorCompleteObs}
\end{equation}
Given that $\tf{A}$ and $\tf{B}_i, 1\leq i \leq N$, in the linear dynamics \eqref{app:dynMajorCompleteObs} are time-invariant, and the cost functional \eqref{app:costMajorCompleteObs} is quadratic, infinite-horizon and time-invariant,
the Nash-equilibrium strategies $u_i, 0\leq i \leq N$, are given by 
\begin{equation}
u_i =  - R^{-1} \tf{B}_i^T ( \Pi_i x^{ex} + s_i), \quad 0\leq i \leq N,
\end{equation}
subject to the set of coupled Riccati equations 
\begin{equation}
\rho \Pi_i = F_i^T \Pi_i + \Pi_i F_i - \Pi_i \textsf{B}_i R^{-1}\textsf{B}_i^T\Pi_i+ \tf{Q}_i,
\end{equation}
and the set of coupled offset equations 
\begin{gather}
\rho s_{i} = F_i^T s_i + \Pi_i \textsf{M}_ - {\gamma}_i,\label{app:Offset}
\end{gather}
where $F_i = \tf{A}-\sum_{j=0,j\neq i}^{N}\tf{B}_j R^{-1} \tf{B}_j^T \Pi_j $.

Therefore, when we want to obtain an ansatz for the mean field equation for the completely observed major minor LQG mean field game (MM LQG MFG) systems, we consider the most general time-invariant state feedback controls for a generic minor agent as 
\begin{equation}\label{app:cntrlgenCompObs}
u_i = L_1 x_i + L_2 x_0 + L_3 \sum_{j=1}^{N} x_j + m.
\end{equation}  
Then we substitute \eqref{app:cntrlgenCompObs} in the dynamics of each agent, take the average over population and then its limit as $N \rightarrow \infty$, to obtain the ansatz for the mean field equation as 
\begin{equation}
d\bar{x} = (\bar{A} \bar{x} + \bar{G} x_0 + \bar{m})dt,   \quad  \text{q.m.}
\end{equation}

\subsection{Partial Observations Case}

We note that for the information patterns considered in this paper, the problem of partially observed MM LQG MFG systems in the finite population case is an open problem. This is because in order to compute its best response strategy (i.e. feedback control actions yielding a Nash  equilibrium), an agent requires estimates of other agents' states, and for that they need to estimate other agents' strategies; this leads to the generation of second-order and higher-order estimates. Hence each agent will have an infinite-dimensional extended system. We remark that still in this case the system matrix and the control matrix coefficient would be time-invariant. Moreover, the weight matrices in the infinite-horizon extended cost functional are time-invariant. \\
The alternative is to solve the infinite-population limit of the problem, where the major agent is not impacted by the individual minor agents but each minor agent is impacted by the major agent. Hence, in the infinite-population limit, to compute their best response strategies (i.e. those yielding a Nash equilibrium) the major agent does not estimate the minor agents' states but the mean field. This is while each minor agent estimates the major agent's state and the mean field, for which it estimates the major agent's strategy leading to the generation of the second-order estimates. Therefore, in the infinite population case the major agent only generates first-order estimates, and each minor agent generates first-order and second-order estimates. Using this fact to obtain an ansatz for the mean field equation, only first-order estimates generated by the major agent, and first-order and second-order estimates generated by minor agents play a role. Accordingly, in the following we give the filter equations for the major agent and minor agents in the finite-population case, and subsequently show that in the finite population case time-invariant linear state feedback controls are optimal (in the sense of yielding a Nash equilibrium) for the agents' extended systems.
\subsubsection{Major Agent:}
The filter equation for the major agent in the finite population case is given by
\begin{multline}
\begin{bmatrix}
d\hat{x}_{0|\mc{F}^y_0}\\
d\hat{x}_{1|\mc{F}^y_0}\\
\vdots\\
d\hat{x}_{i|\mc{F}^y_0}\\
\vdots\\
d\hat{x}_{N|\mc{F}^y_0}
\end{bmatrix} = \begin{bmatrix}
A_0 & 0 & 0 & \dots & 0 & \dots & 0\\
G & A & 0 & \dots & 0 & \dots & 0\\
\vdots& & &\vdots & &  &\vdots \\
G & 0 & 0 & \dots & A & \dots & 0\\
\vdots& & &\vdots & &  &\vdots \\
G & 0 & 0 & \dots & 0 & \dots & A
\end{bmatrix} \begin{bmatrix}
\hat{x}_{0|\mc{F}^y_0}\\
\hat{x}_{1|\mc{F}^y_0}\\
\vdots\\
\hat{x}_{i|\mc{F}^y_0}\\
\vdots\\
\hat{x}_{N|\mc{F}^y_0}
\end{bmatrix} dt \\+ \begin{bmatrix}
B_0 & 0 & 0 & \dots & 0 & \dots & 0\\
0 & B & 0 & \dots & 0 & \dots & 0\\
\vdots& & &\vdots & &  &\vdots \\
0 & 0 & 0 & \dots & B & \dots & 0\\
\vdots& & &\vdots & &  &\vdots \\
0 & 0 & 0 & \dots & 0 & \dots & B
\end{bmatrix} \begin{bmatrix}
\hat{u}_{0|\mc{F}^y_0}\\
\hat{u}_{1|\mc{F}^y_0}\\
\vdots\\
\hat{u}_{i|\mc{F}^y_0}\\
\vdots\\
\hat{u}_{N|\mc{F}^y_0}
\end{bmatrix} dt + K_0 d\nu_0,  \label{app:finDynPartialObs}
\end{multline}
Equivalently the above system can be written as 
\begin{equation}
d\hat{x}^{ex}_{|\mc{F}^y_0} = (\tf{A} \hat{x}^{ex}_{|\mc{F}^y_0} + \sum_{i=0}^{N}\tf{B}_i\hat{u}_{i|\mc{F}^y_0})dt + K_0 d\nu_0 \label{app:dynMajorPartialObs}
\end{equation}
The cost functional of the major agent in the finite population case is given by
\begin{multline} 
J_{0}^{N, ex}(u_{0}, u_{-0}) = \mathbb{E} \int_{0}^{\infty}  e^{-\rho t} \Big\{ \Vert \hat{x}^{ex}_{|\mc{F}^y_0}\Vert _{\tf{Q}_0}^2 -2(\hat{x}^{ex}_{|\mc{F}^y_0})^T \gamma_0+ \Vert u_0 \Vert_{R_0}^2 \Big \}dt \\
+ \mathbb{E} \int_{0}^{\infty} e^{-\rho t}\Big\{\Vert x^{ex}- \hat{x}^{ex}_{|\mc{F}^y_0}\Vert _{\tf{Q}_0}^2 -2(x^{ex}- \hat{x}^{ex}_{|\mc{F}^y_0})^T \gamma_0\Big\}dt, \label{app:costMajorPartialObs}
\end{multline}
Following the results of Section \ref{sec:CO}, given that $\tf{A}$ and $\tf{B}_i, 1\leq i \leq N$, in the linear dynamics \eqref{app:dynMajorPartialObs} are time-invariant, and the cost functional \eqref{app:costMajorPartialObs} is quadratic, infinite-horizon and time-invariant, we consider the following time-invariant linear state feedback controls as ansatze for  the optimal controls $\hat{u}_{i|\mc{F}^y_0}, 0 \leq i \leq N$,
\begin{gather}
\hat{u}_{0|\mc{F}^y_0} = L_1^0 \hat{x}_{0|\mc{F}^y_0} + L_2^0 \sum_{j=1}^{N}\hat{x}_{j|\mc{F}^y_0} + L_3^0 \label{app:majorMajorAnsatz}\\
\hat{u}_{i|\mc{F}^y_0} = L_4^0 \hat{x}_{i|\mc{F}^y_0} + L_5^0 \hat{x}_{0|\mc{F}^y_0}  + L_6^0\sum_{j=1}^{N}\hat{x}_{j|\mc{F}^y_0} + L_7^0 \label{app:majorMinorAnsatz}
\end{gather}
Note that all coefficients $L_1^0,\, L_2^0,\, L_3^0,\,L_4^0,\, L_5^0, \, L_6^0,\, L_7^0       $, are time-invariant. \\
%\newpage
%\begin{multline}
%\begin{bmatrix}
%d\hat{x}_{0|\mc{F}^y_i}\\
%d\hat{x}_{1|\mc{F}^y_i}\\
%d\hat{x}_{2|\mc{F}^y_i}\\
%\vdots\\
%d\hat{x}_{i|\mc{F}^y_i}\\
%\vdots\\
%d\hat{x}_{N|\mc{F}^y_i}\\
%\end{bmatrix} = \begin{bmatrix}
%A_0 & 0 & 0 & \dots & 0 & \dots & 0\\
%G & A & 0 & \dots & 0 & \dots & 0\\
%G & 0 & A & \dots & 0 & \dots & 0\\
%\vdots& & &\vdots & &  &\vdots \\
%G & 0 & 0 & \dots & A & \dots & 0\\
%\vdots& & &\vdots & &  &\vdots \\
%G & 0 & 0 & \dots & 0 & \dots & A
%\end{bmatrix} \begin{bmatrix}
%\hat{x}_{0|\mc{F}^y_i}\\
%\hat{x}_{1|\mc{F}^y_i}\\
%\hat{x}_{2|\mc{F}^y_i}\\
%\vdots\\
%\hat{x}_{i|\mc{F}^y_i}\\
%\vdots\\
%\hat{x}_{N|\mc{F}^y_i}
%\end{bmatrix} dt \\+ \begin{bmatrix}
%B_0 & 0 & 0 & \dots & 0 & \dots & 0\\
%0 & B & 0 & \dots & 0 & \dots & 0\\
%0 & 0 & B & \dots & 0 & \dots & 0\\
%\vdots& & &\vdots & &  &\vdots \\
%0 & 0 & 0 & \dots & B & \dots & 0\\
%\vdots& & &\vdots & &  &\vdots \\
%0 & 0 & 0 & \dots & 0 & \dots & B
%\end{bmatrix} \begin{bmatrix}
%\hat{u}_0\\
%\hat{u}_1\\
%\hat{u}_2\\
%\vdots\\
%\hat{u}_i\\
%\vdots\\
%\hat{u}_N
%\end{bmatrix} dt + K_i d\nu_i,  \label{finDynPartialObs}
%\end{multline}
\subsubsection{Minor Agent:}

The filter equation for a minor agent $\mc{A}_i,\, 1\leq i \leq N$, in the finite population case is given by
\begin{multline}
\hspace{-1cm}
\begin{bmatrix}
d\hat{x}_{0|\mc{F}^y_i}\\
d\hat{x}_{1|\mc{F}^y_i}\\
\vdots\\
d\hat{x}_{i|\mc{F}^y_i}\\
\vdots\\
d\hat{x}_{N|\mc{F}^y_i}\\
d(\hat{x}_{0|\mc{F}^y_0})_{|\mc{F}^y_i}\\
d(\hat{x}_{1|\mc{F}^y_0})_{|\mc{F}^y_i}\\
\vdots\\
d(\hat{x}_{N|\mc{F}^y_0})_{|\mc{F}^y_i}
\end{bmatrix} = \begin{bmatrix}
A_0 & 0 & \dots & 0 & \dots & 0 & B_0 L^0_1 & B_0 L^0_2 & \dots & B_0 L^0_2\\
G & A & \dots & 0 & \dots & 0 & 0 & 0 & \dots & 0\\
\vdots&  & & &  &\vdots &  & & \vdots & \\
G & 0  & \dots & A & \dots & 0& 0 & 0 & \dots & 0\\
\vdots&  & & &  &\vdots &  &  & \vdots & \\
G & 0  & \dots & 0 & \dots & A & 0 & 0 & \dots & 0\\
0 & 0 & \dots & 0 & \dots & 0 & A_0+B_0 L^0_1 & B_0 L^0_2 & \dots & B_0 L^0_2\\
0 & 0 & \dots & 0 & \dots & 0 & G+BL^0_5 & A +B(L^0_4+L^0_6) &\dots & B L^0_6\\
\vdots&  & & &  &\vdots &  &  & \vdots & \\
0 & 0 & \dots & 0 & \dots & 0 & G+BL^0_5 & B L^0_6 & \dots & A +B(L^0_4+L^0_6)
\end{bmatrix} \\ \times \begin{bmatrix}
\hat{x}_{0|\mc{F}^y_i}\\
\hat{x}_{1|\mc{F}^y_i}\\
\vdots\\
\hat{x}_{i|\mc{F}^y_i}\\
\vdots\\
\hat{x}_{N|\mc{F}^y_i}\\
(\hat{x}_{0|\mc{F}^y_0})_{|\mc{F}^y_i}\\
(\hat{x}_{1|\mc{F}^y_0})_{|\mc{F}^y_i}\\
\vdots\\
(\hat{x}_{N|\mc{F}^y_0})_{|\mc{F}^y_i}
\end{bmatrix} dt + \begin{bmatrix}
 0 &  \dots & 0 & \dots & 0\\
 B &  \dots & 0 & \dots & 0\\
  & \vdots & &  &\vdots \\
 0 &  \dots & B & \dots & 0\\
 & \vdots & &  &\vdots \\
 0  & \dots & 0 & \dots & B\\
 0  & \dots & 0 & \dots & 0\\
 0  & \dots & 0 & \dots & 0\\
 & \vdots & &  &\vdots \\
 0  & \dots & 0 & \dots & 0\\
\end{bmatrix} \begin{bmatrix}
\hat{u}_{1|\mc{F}^y_i}\\
\vdots\\
\hat{u}_{i|\mc{F}^y_i}\\
\vdots\\
\hat{u}_{N|\mc{F}^y_i}
\end{bmatrix} dt + \begin{bmatrix}
L^0_3\\
0\\
\vdots\\
0\\
\vdots\\
0\\
L^0_3\\
L^0_7\\
\vdots\\
L^0_7
\end{bmatrix} dt + K_i d\nu_i, \quad 1\leq i \leq N,
 \label{app:finDynPartialObs}
\end{multline}
where \eqref{app:majorMajorAnsatz} and \eqref{app:majorMinorAnsatz} have been substituted.  
Equivalently, the above system can be written as
\begin{equation} 
d\begin{bmatrix}\hat{x}^{ex}_{|\mc{F}^y_i}\\(\hat{x}^{ex}_{|\mc{F}^y_0})_{|\mc{F}^y_i}\end{bmatrix}= \left(\tbf{A} \begin{bmatrix}\hat{x}^{ex}_{|\mc{F}^y_i}\\(\hat{x}^{ex}_{|\mc{F}^y_0})_{|\mc{F}^y_i}\end{bmatrix} + \sum_{j=0}^{N}\tbf{B}_j\hat{u}_{j|\mc{F}^y_i} + \tbf{M}\right)dt + K_i d\nu_i. \label{app:dynMinorPartialObs}
\end{equation}
The cost functional of the minor agent in the finite population case is given by
\begin{multline} 
J_{i}^{N, ex}(u_{i}, u_{-i}) = \mathbb{E} \int_{0}^{\infty}  e^{-\rho t} \left\{ \bigg\Vert \begin{bmatrix}\hat{x}^{ex}_{|\mc{F}^y_i}\\(\hat{x}^{ex}_{|\mc{F}^y_0})_{|\mc{F}^y_i}\end{bmatrix}\bigg\Vert _{\textbf{\tf{Q}}_i}^2 -2\bigg(\begin{bmatrix}\hat{x}^{ex}_{|\mc{F}^y_i}\\(\hat{x}^{ex}_{|\mc{F}^y_0})_{|\mc{F}^y_i}\end{bmatrix}\bigg)^T{\Gamma}_i+ \Vert u_i \Vert_{R}^2 \right\}dt\\
\mathbb{E} \int_{0}^{\infty}  e^{-\rho t} \left\{ \bigg\Vert \begin{bmatrix}x^{ex}-\hat{x}^{ex}_{|\mc{F}^y_i}\\\hat{x}^{ex}_{|\mc{F}^y_0}-(\hat{x}^{ex}_{|\mc{F}^y_0})_{|\mc{F}^y_i}\end{bmatrix}\bigg\Vert_{\textbf{\tf{Q}}_i}^2 -2\bigg(\begin{bmatrix}x^{ex}-\hat{x}^{ex}_{|\mc{F}^y_i}\\\hat{x}^{ex}_{|\mc{F}^y_0}-(\hat{x}^{ex}_{|\mc{F}^y_0})_{|\mc{F}^y_i}\end{bmatrix}\bigg)^T{\Gamma}_i \right\}dt
, \quad 1\leq i \leq N. \label{app:costMinorPartialObs}
\end{multline}
Following the results of Section \ref{sec:CO}, given that $\tbf{A}$, $\tbf{B}_i, 1\leq i \leq N$, and $\tbf{M}$ in the linear dynamics \eqref{app:dynMinorPartialObs} are time-invariant, and the cost functional \eqref{app:costMinorPartialObs} is quadratic, infinite-horizon and time-invariant, we consider the following time-invariant linear state feedback controls as an ansatz for the optimal controls $\hat{u}_{i|\mc{F}^y_i}, 1 \leq i \leq N$,
\begin{equation}\label{app:generalMinorCntrl}
\hat{u}_{i|\mc{F}^y_i} =  L_1 \hat{x}_{i|\mathcal{F}^y_i} + L_2 \hat{x}_{0|\mathcal{F}^y_i} + L_3\sum_{j=1}^{N}\hat{x}_{j|\mathcal{F}^y_i} + L_4 (\hat{x}_{0|\mathcal{F}^y_0})_{|\mathcal{F}^y_i} + L_5 \sum_{j=1}^{N}  (\hat{x}_{j|\mathcal{F}^y_0})_{|\mathcal{F}^y_i} + m.
\end{equation}
Then we substitute \eqref{generalMinorCntrl} in the dynamics of each agent, take the average over population and then its limit as $N \rightarrow \infty$, to obtain the ansatz for the mean field equation as (for the details of the derivation see Section III.A of the manuscript.)
%\end{itemize}
\begin{equation}\label{app:MeanFieldEq}
d\bar{x} = \bar{A} \bar{x}dt + \bar{G} x_0 dt + \bar{H} \hat{x}_{0|\mathcal{F}^y_0} dt + \bar{L} \hat{\bar{x}}_{|\mathcal{F}^y_0} dt + \bar{J}\, \bar{\tilde{x}}^{ex}dt + \bar{m}dt.
\end{equation}

We note that the only time-varying coefficient in \eqref{app:dynMajorPartialObs}-\eqref{app:costMajorPartialObs} and \eqref{app:dynMinorPartialObs}-\eqref{app:costMinorPartialObs} is the Kalman filter gain $K_i$ (diffusion coefficient) due to the the time-varying nature of the estimation error covariance matrix $V_i$. However, due to the Certainty Equivalence Principle the diffusion coefficient does not play a role in determining the coefficients of the control action as shown in Section \ref{sec:CO}. Therefore, the coefficients in the mean field equation \eqref{MeanFieldEq} do not depend parametrically on the filter gain, and  hence do not depend on the estimation error covariance and the observation noise covariance. 

\bibliographystyle{IEEEtran}
% argument is your BibTeX string definitions and bibliography database(s)
\bibliography{bib_Dena_08Jul20}

% Generated by IEEEtran.bst, version: 1.13 (2008/09/30)
\begin{thebibliography}{10}
\providecommand{\url}[1]{#1}
\csname url@samestyle\endcsname
\providecommand{\newblock}{\relax}
\providecommand{\bibinfo}[2]{#2}
\providecommand{\BIBentrySTDinterwordspacing}{\spaceskip=0pt\relax}
\providecommand{\BIBentryALTinterwordstretchfactor}{4}
\providecommand{\BIBentryALTinterwordspacing}{\spaceskip=\fontdimen2\font plus
\BIBentryALTinterwordstretchfactor\fontdimen3\font minus
  \fontdimen4\font\relax}
\providecommand{\BIBforeignlanguage}[2]{{%
\expandafter\ifx\csname l@#1\endcsname\relax
\typeout{** WARNING: IEEEtran.bst: No hyphenation pattern has been}%
\typeout{** loaded for the language `#1'. Using the pattern for}%
\typeout{** the default language instead.}%
\else
\language=\csname l@#1\endcsname
\fi
#2}}
\providecommand{\BIBdecl}{\relax}
\BIBdecl

\bibitem{HuangCDC2003a}
M.~Huang, P.~E. Caines, and R.~P. Malham\'e, ``Individual and mass behavior in
  large population stochastic wireless power control problems: centralized and
  {Nash} equilibrium solutions,'' in \emph{Proceedings of the 42nd {IEEE}
  Conference on Decision and Control ({CDC})}, Maui, {HI}, Dec. 2003, pp.
  98--103.

\bibitem{HuangCDC2003b}
M.~Huang, R.~P. Malham\'e, and P.~E. Caines, ``Stochastic power control in
  wireless communication systems: analysis, approximate control algorithms and
  state aggregation,'' in \emph{Proceedings of the 42nd {IEEE} Conference on
  Decision and Control ({CDC})}, Maui, {HI}, Dec. 2003, pp. 4231--4236.

\bibitem{HuangCIS2006}
------, ``Large population stochastic dynamic games: closed-loop
  {McKean-Vlasov} systems and the {Nash} certainty equivalence principle,''
  \emph{Communications in Information and Systems}, vol.~6, no.~3, pp.
  221--252, 2006.

\bibitem{HuangTAC2007}
M.~Huang, P.~E. Caines, and R.~P. Malham\'e, ``Large-population cost-coupled
  {LQG} problems with nonuniform agents: individual-mass behavior and
  decentralized $\epsilon$-{N}ash equilibria,'' \emph{IEEE Transaction on
  Automatic Control}, vol.~52, no.~9, pp. 1560--1571, 2007.

\bibitem{Lasry2006a}
J.-M. Lasry and P.-L. Lions, ``Jeux \`a champ moyen. i - le cas stationnaire,''
  \emph{Comptes Rendus de l'Acad\'emie des Sciences}, vol. 343, pp. 619--625,
  2006.

\bibitem{Lasry2006b}
------, ``Jeux \`a champ moyen. ii - horizon fini et contr\^ole optimal,''
  \emph{Comptes Rendus de l'Acad\'emie des Sciences}, vol. 343, pp. 679--684,
  2006.

\bibitem{Lasry2007}
------, ``Mean field games,'' \emph{Japanese Journal of Mathematics}, vol.~2,
  no.~1, pp. 229--260, 2007.

\bibitem{CarmonaDelarueBook2018}
R.~Carmona and F.~Delarue, \emph{Probabilistic Theory of Mean Field Games with
  Applications I-II}.\hskip 1em plus 0.5em minus 0.4em\relax Springer, 2018.

\bibitem{CardaliaguetMasterEqBook2019}
P.~Cardaliaguet, F.~Delarue, J.-M. Lasry, and P.-L. Lions, \emph{The Master
  Equation and the Convergence Problem in Mean Field Games: (AMS-201)}.\hskip
  1em plus 0.5em minus 0.4em\relax Princeton University Press, 2019, vol.~2.

\bibitem{BensoussanBook2013}
A.~Bensoussan, J.~Frehse, and P.~Yam, \emph{Mean Field Games and Mean Field
  Type Control Theory}.\hskip 1em plus 0.5em minus 0.4em\relax Springer-Verlag
  New York, 2013.

\bibitem{Huang2010}
M.~Huang, ``Large-population {LQG} games involving a major player: The {N}ash
  certainty equivalence principle,'' \emph{SIAM Journal on Control and
  Optimization}, vol.~48, no.~5, pp. 3318--3353, 2010.

\bibitem{Huang2012}
S.~L. Nguyen and M.~Huang, ``Linear-quadratic-gaussian mixed games with
  continuum-parametrized minor players,'' \emph{SIAM Journal on Control and
  Optimization}, vol.~50, no.~5, pp. 2907--2937, 2012.

\bibitem{FCJ-Convex2018}
\BIBentryALTinterwordspacing
D.~Firoozi, S.~Jaimungal, and P.~E. Caines, ``Convex analysis for {LQG} systems
  with applications to major--minor {LQG} mean--field game systems,'' \emph{(to
  appear) Systems \& Control Letters}, vol. 142, 2020. [Online]. Available:
  \url{https://www.sciencedirect.com/science/article/pii/S0167691120301158}
\BIBentrySTDinterwordspacing

\bibitem{HuangMA2020}
Y.~Ma and M.~Huang, ``Linear quadratic mean field games with a major player:
  The multi-scale approach,'' \emph{Automatica}, vol. 113, p. 108774, 2020.

\bibitem{FirooziPakniyatCainesCDC2017}
D.~Firoozi, A.~Pakniyat, and P.~E. Caines, ``A mean field game - hybrid systems
  approach to optimal execution problems in finance with stopping times,'' in
  \emph{Proceedings of the 56th IEEE Conference on Decision and Control (CDC)},
  Melbourne, Australia, Dec. 2017, pp. 3144--3151.

\bibitem{FPC-arXiv2018}
\BIBentryALTinterwordspacing
------, ``A hybrid optimal control approach to {LQG} mean field games with
  switching and stopping strategies,'' \emph{arXiv}, 2018. [Online]. Available:
  \url{https://arxiv.org/abs/1810.02920}
\BIBentrySTDinterwordspacing

\bibitem{Kordonis2015}
I.~{Kordonis} and G.~P. {Papavassilopoulos}, ``{LQ Nash} games with random
  entrance: An infinite horizon major player and minor players of finite
  horizons,'' \emph{IEEE Transactions on Automatic Control}, vol.~60, no.~6,
  pp. 1486--1500, 2015.

\bibitem{NourianSiam2013}
M.~Nourian and P.~E. Caines, ``$\epsilon$-{N}ash mean field game theory for
  nonlinear stochastic dynamical systems with major and minor agents,''
  \emph{SIAM Journal on Control and Optimization}, vol.~51, no.~4, pp.
  3302--3331, 2013.

\bibitem{CarmonaZhu2016}
R.~Carmona and X.~Zhu, ``A probabilistic approach to mean field games with
  major and minor players,'' \emph{Annals of Applied Probability}, vol.~26,
  no.~3, pp. 1535--1580, 2016.

\bibitem{CarmonaWang2017}
R.~Carmona and P.~Wang, ``An alternative approach to mean field game with major
  and minor players, and applications to herders impacts,'' \emph{Applied
  Mathematics \& Optimization}, vol.~76, no.~1, pp. 5--27, 2017.

\bibitem{LasryLions2018}
J.-M. Lasry and P.-L. Lions, ``Mean-field games with a major player,''
  \emph{Comptes Rendus Mathematique}, vol. 356, no.~8, pp. 886 -- 890, 2018.

\bibitem{CardaliaguetCirant2018}
\BIBentryALTinterwordspacing
P.~Cardaliaguet, M.~Cirant, and A.~Porretta, ``Remarks on {Nash} equilibria in
  mean field game models with a major player,'' \emph{arXiv}, 2018. [Online].
  Available: \url{https://arxiv.org/abs/1811.02811}
\BIBentrySTDinterwordspacing

\bibitem{HuangCIS2020}
M.~Huang, ``Linear-quadratic mean field games with a major player: Nash
  certainty equivalence versus master equations,'' \emph{Communications in
  Information and Systems,}, vol. Special Issue in Honor of Professor Tyrone
  Duncan on the Occasion of His 80th Birthday., pp. 213--242, 2020.

\bibitem{BensoussanSICON2017}
A.~Bensoussan, M.~H.~M. Chau, Y.~Lai, and S.~C.~P. Yam, ``Linear-quadratic mean
  field {Stackelberg} games with state and control delays,'' \emph{SIAM Journal
  on Control and Optimization}, vol.~55, no.~4, pp. 2748--2781, 2017.

\bibitem{BasarMoon2018}
J.~{Moon} and T.~{Ba\c{s}ar}, ``Linear quadratic mean field {Stackelberg}
  differential games,'' \emph{Automatica}, vol.~97, pp. 200 -- 213, 2018.

\bibitem{HuangMTNS2006}
M.~Huang, P.~E. Caines, and R.~P. Malham{\'e}, ``Distributed multi-agent
  decision-making with partial observations: asymptotic {N}ash equilibria,'' in
  \emph{Proceedings of the 17th International Symposium on Mathematical Theory
  of Networks and Systems (MTNS)}, Kyoto, Japan, Jul. 2006, pp. 2725--2730.

\bibitem{Kizilkale2014}
P.~E. Caines and A.~C. Kizilkale, ``Mean field estimation for partially
  observed {LQG} systems with major and minor agents,'' in \emph{Proceedings of
  the 19th World Congress of the International Federation of Automatic Control
  (IFAC)}, Cape Town, South Africa, Aug. 2014, pp. 8705--8709.

\bibitem{KizilkaleTAC2016}
------, ``$\epsilon$-{N}ash equilibria for partially observed {LQG} mean field
  games with major player,'' \emph{IEEE Transaction on Automatic Control},
  vol.~62, no.~7, pp. 3225--3234, 2017.

\bibitem{SenCDC2014}
N.~\c{S}en and P.~E. Caines, ``Mean field games with partially observed major
  player and stochastic mean field,'' in \emph{Proceedings of the 53rd IEEE
  Conference on Decision and Control (CDC)}, Los Angeles, {CA}, Dec. 2014, pp.
  2709--2715.

\bibitem{SenSIAM2016}
------, ``Mean field game theory with a partially observed major agent,''
  \emph{SIAM Journal on Control and Optimization}, vol.~54, no.~6, pp.
  3174--3224, 2016.

\bibitem{CainesSen2019}
------, ``Mean field games with partial observation,'' \emph{SIAM Journal on
  Control and Optimization}, vol.~57, no.~3, pp. 2064--2091, 2019.

\bibitem{FirooziCDC2015}
D.~Firoozi and P.~E. Caines, ``$\epsilon$-{N}ash equilibria for partially
  observed {LQG} mean field games with major agent: Partial observations by all
  agents,'' in \emph{Proceedings of the 54th IEEE Conference on Decision and
  Control (CDC)}, Osaka, {Japan}, Dec. 2015, pp. 4430--4437.

\bibitem{FJCCDC2018}
D.~Firoozi, S.~Jaimungal, and P.~E. Caines, ``Mean field game systems with
  common and latent processes,'' in \emph{Proceedings of the 57th IEEE
  Conference on Decision and Control (CDC)}, {Miami Beach}, {FL}, {USA}, Dec.
  2018, pp. 5500--5505.

\bibitem{FJC-arXiv2018}
\BIBentryALTinterwordspacing
D.~Firoozi, P.~E. Caines, and S.~Jaimungal, ``Mean field game systems including
  common noise and markovian latent processes,'' \emph{arXiv}, 2018. [Online].
  Available: \url{https://arxiv.org/abs/1809.07865}
\BIBentrySTDinterwordspacing

\bibitem{FirooziCainesCDC2019}
D.~{Firoozi} and P.~E. {Caines}, ``Belief estimation by agents in major minor
  {LQG} mean field games,'' in \emph{Proceedings of the 58th IEEE Conference on
  Decision and Control (CDC)}, 2019, pp. 1615--1622.

\bibitem{FirooziCDC2016}
D.~Firoozi and P.~E. Caines, ``Mean field game $\epsilon$-{N}ash equilibria for
  partially observed optimal execution problems in finance,'' in
  \emph{Proceedings of the 55th IEEE Conference on Decision and Control (CDC)},
  Las Vegas, {NV}, {USA}, Dec. 2016, pp. 268--275.

\bibitem{FirooziISDG2017}
------, ``The execution problem in finance with major and minor traders: A mean
  field game formulation,'' in \emph{Advances in Dynamic and Mean Field Games:
  Theory, Applications, and Numerical Methods}, J.~Apaloo and B.~Viscolani,
  Eds.\hskip 1em plus 0.5em minus 0.4em\relax Cham: Springer International
  Publishing, 2017, pp. 107--130.

\bibitem{XYZBook1999}
{J. Yong and X. Y. Zhou}, \emph{Stochastic controls: Hamiltonian systems and
  {HJB} equations}.\hskip 1em plus 0.5em minus 0.4em\relax { New York}:
  {Springer-Verlag }, 1999.

\bibitem{OptCntrl_Anderson_Moore}
B.~D.~O. Anderson and J.~B. Moore, \emph{Optimal Control: Linear Quadratic
  Methods}.\hskip 1em plus 0.5em minus 0.4em\relax USA: Prentice-Hall, Inc.,
  1990.

\bibitem{CainesBook1988}
P.~E. Caines, \emph{Linear stochastic systems}.\hskip 1em plus 0.5em minus
  0.4em\relax Philadelphia: SIAM Classics in Applied Mathematics, SIAM, 2018.

\bibitem{DavisBook1977}
M.~Davis, \emph{Linear estimation and stochastic control}.\hskip 1em plus 0.5em
  minus 0.4em\relax London: Chapman and Hall, 1977.

\bibitem{Basar1976}
T.~Ba\c{s}ar, ``On the uniqueness of the {N}ash solution in linear-quadratic
  differential games,'' \emph{International Journal of Game Theory}, vol.~5,
  no.~2, pp. 65--90, 1976.

\bibitem{Basar1977}
------, ``Informationally nonunique equilibrium solutions in differential
  games,'' \emph{SIAM Journal on Control and Optimization}, vol.~15, no.~4, pp.
  636--660, 1977.

\end{thebibliography}
\end{document}